  \providecommand{\assumptionname}{Assumption A$\!\!\!$}
  \providecommand{\definitionname}{Definition}
  \providecommand{\lemmaname}{Lemma}
  \providecommand{\propositionname}{Proposition}
  \providecommand{\remarkname}{Remark}
\providecommand{\theoremname}{Theorem}
\providecommand{\corollaryname}{Corollary}
  \theoremstyle{plain}
  \newtheorem{thm}{\protect\theoremname}[section]
  \newtheorem{assumption}{\protect\assumptionname}
  \newtheorem{defin}{\protect\definitionname}[section]
  \newtheorem{prop}{\protect\propositionname}[section]
  \newtheorem{cor}{\protect\corollaryname}[section]
  \theoremstyle{remark}
  \newtheorem{rem}{\protect\remarkname}[section]
\newcommand{\Var}{\mathbf{V}{\rm ar}}
\begin{document}

\title{\textsc{Detection of dependence patterns with delay}}

\author{Julien Chevallier\footnote{Corresponding author: {\sf{e-mail: julien.chevallier@unice.fr}}} , ~Thomas Lalo\"e\\
Univ. Nice Sophia Antipolis, CNRS, LJAD, UMR 7351, 06100 Nice, France.
}
\date{} 

\maketitle              

\begin{abstract}
{The Unitary Events (UE) method is a popular and efficient method used this last decade to detect dependence patterns of  joint spike activity among simultaneously recorded neurons. The first introduced method is based on binned coincidence count \citep{Grun1996} and can be applied on two or more simultaneously recorded neurons. Among the improvements of the methods, a transposition to the continuous framework has recently been proposed in \citep{muino2014frequent} and fully investigated in \citep{MTGAUE} for two neurons. The goal of the present paper is to extend this study to more than two neurons. The main result is the determination of the limit distribution of the coincidence count. This leads to the construction of an independence test between $L\geq 2$ neurons. Finally we propose a multiple test procedure via a Benjamini and Hochberg approach \citep{Benjamini1995}. All the theoretical results are illustrated by a simulation study, and compared to the UE method proposed in \citep{Grun2002}. Furthermore our method is applied on real data.}
\end{abstract}

\noindent \textbf{\textit{Mathematical Subject Classification.}} 62M07, 62F03, 62H15, 62P10.\\

\noindent \textbf{\textit{Keywords.}} Unitary Events, Coincidence pattern, Neuronal assemblies, Independence tests, Poisson processes.

\setcounter{tocdepth}{1}
\tableofcontents{}

\clearpage

\section{Introduction}
\label{sec1}

The communication between neurons relies on their capacity to generate characteristic electric pulses called action potentials. These action potentials are usually assumed to be identical stereotyped events. Their time of occurrence (called spike) is considered as the relevant information. That is why the study of spike frequencies (firing rates) of neurons plays a key role in the comprehension of the information transmission in the brain \citep{Abeles1982,Gerstein1969,Shinomoto2010}. Such neuronal signals are recorded from awake behaving animals by insertion of electrodes into the cortex to record the extracellular signals. Potential spike events are extracted from these signals by threshold detection and, by spike sorting algorithms, sorted into the spike signals of the individual single neurons. After this preprocessing, we dispose of sequences of spikes (called  spike trains).\\

\begin{sloppypar}The analysis of spike trains has been an area of very active research for many years \citep{Brown}. Although the rules underlying the information processing in the brain are still under burning debate, the detection of correlated firing between neurons is the objective of many studies in the recent years \citep{Roy2007,Dong2008,Pillow_nature}. This synchronization phenomenon may take an important role in the recognition of sensory stimulus.  In this article, the issue of detecting dependence patterns between simultaneously recorded spike trains is addressed. Despite the fact that some studies used to consider neurons as independent entities \citep{Barlow1972}, many theoretical works consider the possibility that neurons can coordinate their activities \citep{Hebb1949,Palm1990,Sakurai1999,VonDerMalsburg1981}. The understanding of this synchronization phenomenon \citep{Singer1993} required the development of specific descriptive analysis methods of spike-timing over the last decades: cross-correlogram \citep{Perkel1967}, gravitational clustering \citep{Gerstein1985} or joint peristimulus time histogram (JPSTH, \citep{Aertsen89}).  Following the idea that the influence of a neuron over others (whether exciting or inhibiting) results in the presence (or absence) of coincidence patterns, Gr\"un and collaborators developed one of the most popular and efficient method used this last decade: the Unitary Events (UE) analysis method \citep{Grun1996} and the corresponding independence test, which detects where dependence lies by assessing p-values (A Unitary Event is a spike synchrony that recurs more often than expected by chance). This method is based on a binned coincidence count that is unfortunately known to suffer a loss in synchrony detection, but this flaw has been corrected by the multiple shift coincidence count \citep{Grun1999}. 

In order to deal with continuous time processes, a new method ( Multiple Tests based on a Gaussian Approximation of the Unitary Events method), based on a generalization of this count, the delayed coincidence count, has recently been proposed for two parallel neurons (Section 3.1 of \citep{MTGAUE}). The results presented in this article are in the lineage of this newest method and are applied on continuous point processes (random set of points which are modelling spike trains). Testing independence between real valued random variables is a well known problem, and various techniques have been developed, from the classical chi-square test to re-sampling methods for example. The interested reader may look at \citep{Lehm_2005_book}. Some of these methods and more general surrogate data methods have been applied on binned coincidence count, since the binned process transforms the spike train in vectors of finite dimension. However, the case of point processes that are not preprocessed needs other tools and remains to study. Although the binned method can deal with several neurons (six simultaneously recorded neurons are analysed in \citep{Grun2002}, both of the improvements (Multiple Shift and MTGAUE) can only consider pairs of neurons. Thus, our goal is to generalize the method introduced in \citep{MTGAUE} for more than two neurons. Unlike MTGAUE, our test is not designed to be performed on multiple time windows. However it can be multiple with respect to the different possible patterns composed from $n\geq 2$ neurons (see Section \ref{Mpt}). \\\end{sloppypar}

In Section \ref{sec:Coincidence}, we introduce the different notions of coincidence used through this article. In Section \ref{sec:Test}, a test is established and the asymptotic control of its false positive rate is proven. In Section \ref{sec:Simulations} our test is confronted to the original UE method on simulated data and the accuracy of the Gaussian approximation is verified. In Section \ref{sec:Hawkes} the relevance of our method when our main theoretical assumptions are weakened is also empirically put on test. Section \ref{sec:real} presents an illustration on real data. All the technical proofs are given in the Appendix.

\section{Notions of coincidence and the classical UE methods}
\label{sec:Coincidence}

In order to detect synchronizations between the involved neurons, different notions of coincidence can be considered. Informally, there is a coincidence between neurons when they each emit a spike more or less simultaneously. This notion has already been used in UE methods \citep{Grun2002} and is based on the following idea: a real dependency between $n\geq 2$ neurons should be characterized by an unusually large (or low) number of coincidence \citep{Grammont2003,Grun1996,MTGAUE}.\\

\subsection{Two notions of coincidence}

The UE method (see \citep{Grun1996}) considers discretized spike trains at a resolution $\ell$ of typically 1 or 0.1 millisecond. Therefore, in the discrete-time framework, each trial consists of a set of $n$ spike trains (one for each recorded neuron), each spike train being represented by a sequence of $0$ and $1$ of length $S$. Since it is quite unlikely that two spikes occur at exactly the same time at this resolution $\ell$, spike trains are binned and clipped at a coarser level. More precisely for a fixed bin size $\Delta=d\ell$ ($d$ being an integer), a new sequence of length $S/d$ of $0$ and $1$ is associated to each spike train ($1$ if at least one spike occurs in the corresponding bin, $0$ otherwise). For more precise informations on the binning procedure and the link with point processes we refer the interested reader to \citep{MTGAUE}. \\

\begin{figure}[!h]
\begin{centering}
\includegraphics[width=0.8\textwidth]{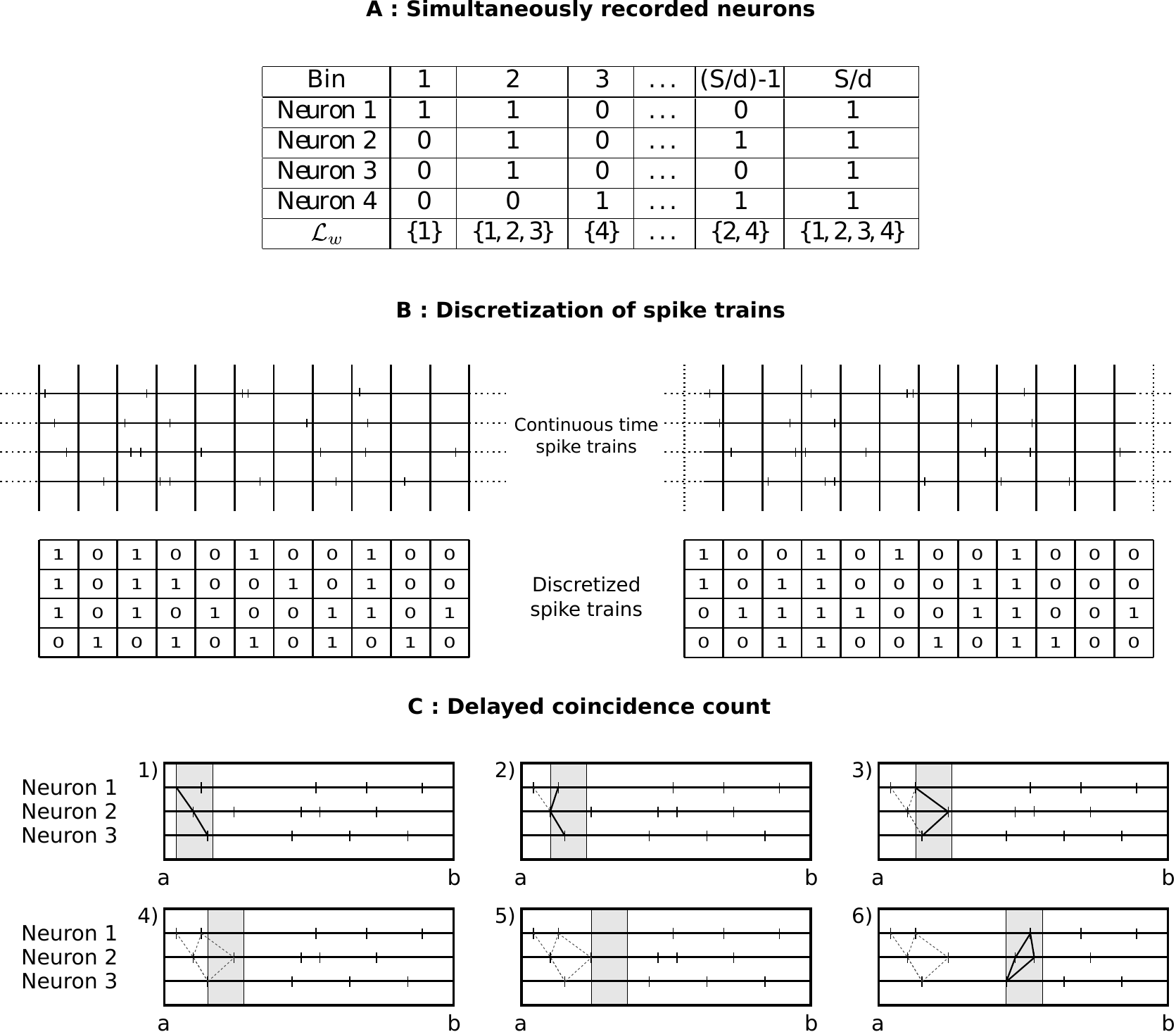}
\par
\end{centering}

\caption{\label{fig: Coincidence}
\small
In \textbf{A}, $4$ parallel binary processes of length $S$ are displayed. At each time step, the constellation and its corresponding subset of $\{1,2,3,4 \}$ are given. For instance, the constellation associated to the first bins is the vector $(1,0,0,0)$ and the corresponding subset is $\{1\}$. In \textbf{B}, illustration of the UE method with two different choices of bins of the same size (the results are different, for example the constellation full of $1$s is present in the second case and not in the first one). In \textbf{C}, an illustration of the six first steps in the dynamical computation of the delayed coincidence count. Here, there are $3$ parallel  time point processes. We consider the full pattern, i.e. $\mathcal{L}=\{1,2,3\}$. The grey rectangle represents the sliding time window of length $\delta$. The bold lines denote the coincidence patterns counted at each step and the grey dashed ones denote the coincidence pattern which have been counted in one the previous steps. At each of steps $1$,$2$ and $3$, exactly one coincidence is counted. At steps $4$ and $5$, no coincidence is detected. And, at step $6$, two coincidences are counted.
}
\end{figure}

A constellation or pattern is a vector of size $n$ of $0$ and $1$ (see Figure \ref{fig: Coincidence} or \citep{Grun2002}). Of course, there are $2^n$ different constellations. The UE statistic associated to some constellation $w$ consists in counting the number of occurrences of such $w$ in the set of $S/d$ vectors of size $n$.\\

However, as shown in Figure \ref{fig: Coincidence}, this method largely depends on the bin choice and it has been  proven in \citep{Grun1999} that this can lead in the case $n=2$ to up to $60\%$ of loss in detection when $\Delta$ is of the order of the range of interaction.\\

Then, we focus on another coincidence count that deals with continuous data. This notion of delayed coincidence count is pretty natural and was used in \citep{muino2014frequent} or \citep{borgelt2013finding} in a simplified formalism. For sake of simplicity, we use the same formalism of point processes as in \citep{MTGAUE}. Nevertheless, we give the correspondences, whenever it is possible, between their formalism and ours (see Table \ref{table:Correspondences}).\\

Considering $N_{1},\dots,N_{n}$, some point processes on $\left[a,b\right]$, and $\mathcal{L}\subset \{1,\dots,n\}$ a set of indices of cardinal $L\geq 2$,  the \emph{delayed coincidence count} $X_{\mathcal{L}}$ (of delay $\delta<(b-a)/2$) over the neurons of subset $\mathcal{L}$ in the time window $[a,b]$ is given by

\begin{equation}
\label{eq:count}
X_{\mathcal{L}}=X_{\mathcal{L}}(\delta)=\sum_{(x_1,\dots,x_L)\in \prod_{l\in \mathcal{L}} N_{l}}
\mathbf{1}_{\left| \max\limits_{i\in\left\{1,\dots ,L\right\}}x_{i} -\min\limits_{i\in\left\{1,\dots ,L\right\}}x_{i} \right|\leq\delta}.
\end{equation}

The  delayed coincidence count can be explained in the following way (see Figure \ref{fig: Coincidence}):
\begin{itemize}
\item
Fix some duration parameter $\delta$ which is the equivalent of the bin size $\Delta$,
\item Count how many times each neuron in $\mathcal{L}$ spikes almost at the same time, modulo the delay $\delta$.
\end{itemize}

\begin{table}[h]

\begin{centering}
\begin{small}
\begin{tabular}{|c|l|l|l|}

   \hline
    & \citep{muino2014frequent} & \citep{borgelt2013finding} & This article \\
   \hline 
   Time window & $[0,T]$ & Not relevant & $[a,b]$ \\
   Subset & $A$ & $A$ & $\mathcal{L}$\\
   Delay & $r$ & $w$ & $\delta$\\
   Number of coincidences & Not relevant & $|\mathcal{E}(A,w)|$ (page 3) & $X_{\mathcal{L}}$\\
   Spike-train synchrony & $Supp_{|A|}(E^{A})$ & $|\mathcal{E}(A,w)|$ (page 4) & Not relevant\\
   \hline
\end{tabular}
\par
\end{small}
\end{centering}
\caption{\label{table:Correspondences} List of the correspondences in the notations between three papers: \citep{muino2014frequent}, \citep{borgelt2013finding} and this article.}
\end{table}

\begin{rem}
For computational reasons, Mui\~no and Borgelt compute some non-overlapping coincidence count, which corresponds to the last line of the Table \ref{table:Correspondences}. They impose the condition that at most one coincidence per spike is counted. The statistical study of this non-overlapping coincidence count is not the scope of this article. Although interesting, this is a much more challenging question.
\end{rem}

\subsection{Original UE method}

The notion of constellation is closely linked to the binning procedure and is not relevant in the continuous time framework. In this work, we fix some subset of neurons, denoted $\mathcal{L}$, and count how many times the neurons of $\mathcal{L}$ admit nearly simultaneous activity. However, there is a canonical correspondence between constellations and set of indices (see Figure \ref{fig: Coincidence}). Then, in order to harmonize the notations between both methods, let us denote $\mathcal{L}(w)$ the set of indices corresponding to the constellation $w$.\\

To detect dependency between neurons, two estimators of the expected coincidence count are compared. The first one is the empirical mean $\bar{m}_w$ of the number of occurrences of a given constellation $w$ through $M$ trials,
\[\bar{m}_w=\frac{1}{M} \sum_{k=1}^{M} m_{w}^{(k)}, \]
where $m_{w}^{(k)}$ is the number of occurrences of $w$ during the $k^{th}$ trial. This estimator is consistent (that is, converges towards the expected value of the number of occurrences) even with dependency between the spike trains. The second one is consistent only under the independence hypothesis, and is given by

\begin{equation}
\hat{m}_{g,w}=\frac{S}{d} \prod_{l\in\mathcal{L}(w)} \hat{p}_l \prod_{k\notin\mathcal{L}(w)} (1-\hat{p}_k),
\end{equation}

\noindent where $\hat{p}_i$ is the empirical probability of finding a spike in a bin of neuron $i$. \\

This enables the construction of the test described in \citep{Grun2002} and based on the comparison between the statistic $M\bar{m}_w$ and a quantile of the Poisson distribution $\mathcal{P}(M\hat{m}_{g,w})$ where $M$ is the number of trials. Most of the time only tests by upper values are computed \citep{Grun1996,Grun2002}.  However, following the study of \citep{MTGAUE}, we have decided to focus on symmetric tests. Hence, the symmetric test based on the UE method rejects the independence hypothesis when $\bar{m}_w$ is too different from $\hat{m}_{g,w}$. However, such a test necessarily makes mistakes. For example, a \emph{false positive} corresponds to an incorrect rejection of the null hypothesis. Hence, an a priori upper bound on the false positive rate, that is the \emph{significance level} (or just \emph{level}), must be given in order to construct a decision rule. The symmetric independence test with level $\alpha$ based on the UE method is governed by the following rule:   if 
\[M\bar{m}_{w} \geq  q_{1-\alpha/2} \quad \mbox{or} \quad M\bar{m}_{w} \leq  q_{\alpha/2},\]
where $q_x$ is the $x$-quantile of the Poisson distribution $\mathcal{P}(M\hat{m}_{g,w})$, then the independence hypothesis is rejected.\\

The UE method is applied under the hypothesis that the discrete processes modelling the spike trains of neurons are  Bernoulli processes. The equivalent in the "continuous" framework is the Poisson process (as it can be seen in \citep{MTGAUE}). This leads to a different estimator of the expected coincidence count and a different test which are defined properly in the next section.

\section{Study of the delayed coincidence count}
\label{sec:Test}

Once the notion of coincidence is defined with respect to continuous data (Equation \eqref{eq:count}) , mathematical tools can be used to construct the desired independence test. The procedure is to provide the expected value and variance of the variable $X_{\mathcal{L}}$ in function of the firing rates. These computations classically imply a Gaussian approximation with respect to i.i.d trials. Unfortunately the firing rates are usually unknown. Thus the final step is to replace the firing rates by their estimator to compute the estimated expected value and variance. This plug-in procedure is known to change the underlying distribution. As in \citep{MTGAUE}, the delta method provides the exact nature of this change.\\

In the continuous framework, a sample is composed of $M$ observations of $N_{1},\cdots,N_{n}$  which are the point processes associated to the spike trains of $n$ neurons on a window $\left[a,b\right]$. The goal is to answer the following question: 
\begin{center}
{\it Given $\mathcal{L}$ a subset of $\left\{1,\dots ,n\right\}$, are the processes $N_{l},\ l\in\mathcal{L}$ independent?}
\end{center}

To do this, a statistical test comparing the two hypotheses
\[
\begin{cases}
\left(\mathcal{H}_{0}\right) & \mbox{The processes \ensuremath{N_{l},\ l\in\mathcal{L}} are independent;}\\
\left(\mathcal{H}_{1}\right) & \mbox{The processes \ensuremath{N_{l},\ l\in\mathcal{L}} are not independent;}
\end{cases}
\]
is proposed.\\

In this section our test and its asymptotic relevance are introduced. First, let us present and discuss our main assumptions which are the same as in \citep{MTGAUE}.

\begin{assumption}
\label{ass: Poisson}$N_{1},\dots,N_{n}$ are Poisson processes.
\end{assumption}

This assumption can be resumed to an assumption of independence of a point process with respect to itself over the time, as Bernoulli processes in discrete settings. 

\begin{assumption}
\label{ass: Partition homogene} The Poisson processes $N_{1},\dots,N_{n}$ are homogeneous on $[a,b]$. 
\end{assumption}
Assumption A\ref{ass: Partition homogene} may also appear very restrictive. But once again Bernoulli processes considered in \citep{Grun1999,Grun2002} have the same drawback. Moreover, if necessary, one can partition $[a,b]$ in smaller intervals on which A\ref{ass: Partition homogene} is satisfied. For more precise informations on Poisson processes we refer the interested reader to \citep{Poisson_Process}.\\

These assumptions are necessary in this work in order to obtain an explicit form for the expected number of coincidences (and its variance). Note that there exist some  surrogate methods in the literature for which there is no need of a model on the data (see \citep{grun2009data,louis2010generation} for a review). In particular  two kind of methods are commonly used: dithering methods (involving random shifts of individual spikes \citep{stark2009unbiased,louis2010surrogate}, or random shifts of patterns of spikes \citep{harrison2009rate}), and trial-shuffling methods \citep{Pipa2003,PipaGrun}. However, they are based on binned coincidence count, and there is no equivalent, up to our knowledge, with a delayed coincidence count, due to serious computational issues. Alternative works have also been done in the Bayesian paradigm \citep{Archer2013}. However, as announced in the introduction, we empirically show in Section \ref{sec:Hawkes} that the assumptions can be weakened. In particular, point processes admitting refractory periods can be taken into account. Thus, a nice perspective of this work could be to derive theoretical results with these weakened assumptions.\\

\subsection{Asymptotic properties}
\label{sub:Asymptotic:properties}

In order to build our independence test, one needs to understand the behaviour of the number of coincidence $X_{\mathcal{L}}$ under the independence hypothesis $\mathcal{H}_0$. In particular, the expected value and the variance of $X_{\mathcal{L}}$ are computed here. In a general point processes framework, these computations are impossible. This is why some restrictive assumptions are needed, such as A\ref{ass: Poisson}, A\ref{ass: Partition homogene}, or the independence of the processes, as done in the original UE method where independent Bernoulli processes have been considered.

\begin{thm}
\label{thm:Moyenne et Esperance} Let $\mathcal{L}$ and $X_{\mathcal{L}}$ be defined as previously. Assume assumptions A\ref{ass: Poisson} and A\ref{ass: Partition homogene} and denote by $\lambda_{1},\dots,\lambda_{n}$ the respective intensities of $N_{1},\dots,N_{n}$. Under hypothesis $\mathcal{H}_{0}$, the expected value and the variance of the number of coincidences $X_{\mathcal{L}}$ are given by:

\[
m_{0, \mathcal{L}}:=\mathbb{E}\left[X_{\mathcal{L}}\right]=\left(\prod_{ l\in\mathcal{L}}\lambda_{ l}\right)I(L,0)
\]
and 
\[
\Var(X_{\mathcal{L}})=m_{0, \mathcal{L}}+\sum_{k=1}^{L-1}\left(\sum_{\begin{subarray}{c}
\mathcal{J}\subset\mathcal{L}\\
\#\mathcal{J}=k
\end{subarray}}\prod_{j\in\mathcal{J}}\lambda_{j}^{2}\prod_{l\notin\mathcal{J}}\lambda_{l}\right)I(L,k)
,\]
where the $I(L,k)$ are given by Proposition \ref{prop:Formule des I(k)} below.
\end{thm}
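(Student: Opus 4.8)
The plan is to compute $\mathbb{E}[X_{\mathcal{L}}]$ and $\mathbb{E}[X_{\mathcal{L}}^2]$ directly from the definition \eqref{eq:count} using the multivariate Mecke / Campbell formula for Poisson processes. For the expectation, I would write $X_{\mathcal{L}}$ as a sum over $L$-tuples, one point drawn from each process $N_l$, $l\in\mathcal{L}$; under $\mathcal{H}_0$ the processes are independent, so the expectation of the sum is the $L$-fold integral of the indicator $\mathbf{1}_{|\max x_i-\min x_i|\le\delta}$ against the product intensity measure. By Assumption A\ref{ass: Poisson} and A\ref{ass: Partition homogene} each intensity measure is $\lambda_l\,dx$ on $[a,b]$, so this integral factors as $\bigl(\prod_{l\in\mathcal{L}}\lambda_l\bigr)$ times a purely geometric quantity, namely the Lebesgue measure of the set $\{(x_1,\dots,x_L)\in[a,b]^L : \max_i x_i-\min_i x_i\le\delta\}$. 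Calling this quantity $I(L,0)$ gives the stated formula for $m_{0,\mathcal{L}}$.

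For the variance I would compute $\mathbb{E}[X_{\mathcal{L}}^2]$ by expanding the square into a double sum over pairs of $L$-tuples $(x_1,\dots,x_L)$ and $(y_1,\dots,y_L)$. The key combinatorial step is to classify such pairs of tuples according to which coordinates coincide, i.e. for which indices $l\in\mathcal{L}$ one has $x_l=y_l$. For a fixed subset $\mathcal{J}\subset\mathcal{L}$ of "shared" coordinates with $\#\mathcal{J}=k$, the corresponding contribution is obtained from the Mecke formula applied to the Poisson processes: each process $N_j$ with $j\in\mathcal{J}$ contributes a single integration variable appearing in both indicators (the factorial moment measure of order one, giving a factor $\lambda_j$), while each process $N_l$ with $l\notin\mathcal{J}$ contributes two distinct points, giving the second factorial moment measure, which for a Poisson process is $\lambda_l^2\,dx\,dy$. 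Independence across the processes under $\mathcal{H}_0$ lets everything factor, so this contribution equals $\bigl(\prod_{j\in\mathcal{J}}\lambda_j^{2}\,\text{wait}\bigr)$—more precisely $\prod_{j\in\mathcal{J}}\lambda_j\prod_{l\notin\mathcal{J}}\lambda_l^2$ times a geometric integral of $\mathbf{1}_{|\max x_i-\min x_i|\le\delta}\,\mathbf{1}_{|\max y_i-\min y_i|\le\delta}$ over the appropriate copies of $[a,b]$, with the shared coordinates identified; this geometric integral is what is packaged as $I(L,k)$. Summing over all $\mathcal{J}$ and all $k$ from $0$ to $L$, then subtracting $m_{0,\mathcal{L}}^2$ (the $k=0$ term of $\mathbb{E}[X_{\mathcal{L}}^2]$ minus the square of the mean), and noticing that the $k=L$ term of $\mathbb{E}[X_{\mathcal{L}}^2]$ is exactly $m_{0,\mathcal{L}}$, one recovers $\Var(X_{\mathcal{L}})=m_{0,\mathcal{L}}+\sum_{k=1}^{L-1}\bigl(\sum_{\#\mathcal{J}=k}\prod_{j\in\mathcal{J}}\lambda_j^{2}\prod_{l\notin\mathcal{J}}\lambda_l\bigr)I(L,k)$ (the roles of $\mathcal{J}$ and its complement in the exponents being a bookkeeping choice matching the definition of $I(L,k)$).

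The main obstacle is the careful bookkeeping in the second-moment computation: one must correctly enumerate the diagonal structure of pairs of $L$-tuples, make sure each stratum is counted exactly once, and invoke the right factorial moment measure of the Poisson process on the coordinates that are "collapsed" versus those that are "doubled." In particular, the self-intersection terms (where a point of $N_l$ is paired with itself) are precisely what produce the "$+m_{0,\mathcal{L}}$" in the variance, and keeping the Poisson combinatorics separate from the geometry of the coincidence region $\{|\max - \min|\le\delta\}$ is the delicate part. The geometry itself is deferred entirely to the constants $I(L,k)$, whose explicit evaluation is handled by Proposition \ref{prop:Formule des I(k)}, so no integral needs to be computed in this proof beyond recognizing its factored form.
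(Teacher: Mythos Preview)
Your plan is correct and matches the paper's proof essentially step for step: the paper also computes the expectation via the product intensity measure and the second moment by partitioning $[a,b]^{2L}$ according to which coordinates lie on the diagonal $\{x_l=y_l\}$ (their $\varepsilon\in\{1,2\}^L$ is exactly your subset $\mathcal{J}$), then invokes the Poisson factorial moment measures and reads off the $I(L,\cdot)$. One small bookkeeping caution: with your convention that $\mathcal{J}$ is the set of \emph{shared} coordinates with $\#\mathcal{J}=k$, the geometric integral that appears is $I(L,L-k)$, not $I(L,k)$ (in the definition of $I(L,k)$ the $L-k$ outer variables are the shared ones), and the relabeling $k\mapsto L-k$ that you allude to at the end is precisely what makes the extreme cases and the final variance formula come out as stated.
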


The proof relies on the calculus of the moments of a sum over a Poisson Process and is given in Appendix \ref{app: expectation and variance}. The integral $I(L,k)$ can be seen as the contribution of a subset of $k$ neurons to the number of coincidences between the $L$ neurons.

\begin{prop}
\label{prop:Formule des I(k)}
For $b>a\geq0$ and $0<\delta<b-a$, define for every $k$ in $\left\{0,\dots ,L\right\} $
\[
I(L,k)=\intop_{[a,b]^{L-k}}\left(\intop_{[a,b]^{k}}\mathbf{1}_{\left|\max\limits_{i\in\left\{1,\dots ,L\right\}}x_{i} -\min\limits_{i\in\left\{1,\dots ,L\right\}}x_{i} \right|\leq\delta}\, dx_{1}\ldots dx_{k}\right)^{2}dx_{k+1}\ldots dx_{L},
\]
where the convention $\intop\limits _{[a,b]^{0}}f\left(x\right) dx=f\left(x\right)$ is set.
Then, for $L\geq2$, and $k$ in $\left\{0,\dots ,L-1\right\} $,
\begin{itemize}
\item $I\left(L,L\right)=L^{2}\left(b-a\right)^{2}\delta^{2L-2}-2L\left(L-1\right)\left(b-a\right)\delta^{2L-1}+\left(L-1\right)^{2}\delta^{2L}$,

\item $I\left(L,k\right)=f\left(L,k\right)\left(b-a\right)\delta^{L+k-1}-h\left(L,k\right)\delta^{L+k}$,

where $\displaystyle f\left(L,k\right)=\frac{k\left(k+1\right)+L\left(L+1\right)}{L-k+1},$

and $\displaystyle h\left(L,k\right)=\frac{-k^{3}+k^{2}(2+L)+k(5+2L-L^{2})+L^{3}+2L^{2}-L-2}{(L-k+2)(L-k+1)}$.

\end{itemize}
\end{prop}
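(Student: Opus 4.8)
The plan is a direct evaluation of the integrals, organised around the elementary geometry of the constraint. Write $T:=b-a$ and $\mathrm{diam}(x):=\max_i x_i-\min_i x_i$. The integrand is invariant under a common translation of all coordinates, so one may assume $[a,b]=[0,T]$; I also work under $\delta<T/2$ (the range in which $X_{\mathcal L}$ is defined), which keeps $\delta<T-\delta$ and hence the case splits below non‑degenerate. The building block is the volume of a diameter constraint on a cube: conditioning on which coordinate realises the minimum (ties being a null set) gives, for every $\ell\ge\delta$ and $j\ge1$,
\[
\mathrm{vol}\{x\in[0,\ell]^{j}:\mathrm{diam}(x)\le\delta\}=j\int_0^{\ell}\bigl(\min(t+\delta,\ell)-t\bigr)^{j-1}\,dt=j\,\ell\,\delta^{j-1}-(j-1)\delta^{j}.
\]
Taking $j=L$, $\ell=T$ and squaring yields at once the announced formula for $I(L,L)$.

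Now fix $k<L$ and set $m:=L-k\ge1$. The point is that the inner integrand $\mathbf{1}_{\{\mathrm{diam}(x_1,\dots,x_L)\le\delta\}}$ depends on $x_{k+1},\dots,x_L$ only through $u:=\min_{k<i\le L}x_i$ and $v:=\max_{k<i\le L}x_i$, since $\mathrm{diam}(x_1,\dots,x_L)=\mathrm{diam}(x_1,\dots,x_k,u,v)$; write the inner integral as $\psi(u,v)$. A short case analysis of ``for which $(x_1,\dots,x_k)$ do the points $x_1,\dots,x_k,u,v$ lie in a common interval of length $\delta$'' shows that this region is empty unless $0\le v-u\le\delta$, and that it then equals $\{x\in[\alpha,\beta]^k:\mathrm{diam}(x)\le\delta\}$, where $\alpha:=\max(0,v-\delta)$ and $\beta:=\min(T,u+\delta)$. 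Since $\delta<T/2$ forces $\beta-\alpha\ge\delta$ whenever $v-u\le\delta$, the building block gives the clean closed form
\[
\psi(u,v)=\bigl(k(\beta-\alpha)\delta^{k-1}-(k-1)\delta^{k}\bigr)\,\mathbf{1}_{\{0\le v-u\le\delta\}}.
\]

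It remains to integrate $\psi^{2}$ over the outer block. For $m\ge2$ this uses $\int_{[0,T]^{m}}g(\min z,\max z)\,dz=m(m-1)\iint_{0\le u\le v\le T}g(u,v)(v-u)^{m-2}\,du\,dv$ (the law of the extremes of $m$ independent uniform points on $[0,T]$), so that
\[
I(L,k)=m(m-1)\iint_{\substack{0\le u\le v\le T\\ 0\le v-u\le\delta}}\bigl(k(\beta-\alpha)\delta^{k-1}-(k-1)\delta^{k}\bigr)^{2}(v-u)^{m-2}\,du\,dv ,
\]
while for $m=1$ (that is, $k=L-1$) one has the simpler one–dimensional $I(L,L-1)=\int_0^T\psi(z,z)^{2}\,dz$. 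In both cases I would substitute $s:=v-u\in[0,\delta]$, split the $u$–range (for fixed $s$) at the breakpoints of $\min(T,u+\delta)$ and $\max(0,u+s-\delta)$ into its three zones, expand the square, and reduce the whole computation to elementary integrals of the type $\int_0^\delta s^{m-2}(2\delta-s)^{r}\,ds$, $r\in\{0,1,2\}$, plus the two boundary–zone contributions. Collecting the part proportional to $T$ yields $f(L,k)\,T\,\delta^{L+k-1}$ and the remaining homogeneous part yields $-\,h(L,k)\,\delta^{L+k}$.

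The real difficulty here is not analytic but combinatorial and algebraic: one must keep several zones and monomials straight, and then verify that all the $\delta^{L+k}$–contributions collapse into the single rational function $h(L,k)$, and likewise for $f(L,k)$. Good consistency checks — which I have verified — are the case $k=0$, where $I(L,0)$ must reduce to $LT\delta^{L-1}-(L-1)\delta^{L}$, fixing $f(L,0)=L$ and, since $L^{3}+2L^{2}-L-2=(L-1)(L+1)(L+2)$, $h(L,0)=L-1$; and the case $k=L-1$, which the method gives as $I(L,L-1)=L^{2}T\delta^{2L-2}-\tfrac{2(L-1)(2L+1)}{3}\delta^{2L-1}$, matching $f(L,L-1)=L^{2}$ and $h(L,L-1)=\tfrac{2(L-1)(2L+1)}{3}$.
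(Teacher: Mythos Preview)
Your proposal is correct and follows the same overall two–step structure as the paper's proof (first compute the inner $k$–fold integral as a function of the outer extremes, then integrate its square over the outer block with the appropriate boundary splits), but your treatment of the inner integral is genuinely cleaner. Where the paper fixes $u=\wedge y_i$, $v=\vee y_i$ and breaks the inner integral into four cases $A,B,C,D$ according to whether $\wedge x_i$ and $\vee x_i$ fall above or below $u$ and $v$, you instead observe once and for all that $\{\mathrm{diam}(x_1,\dots,x_k,u,v)\le\delta\}=\{x\in[\alpha,\beta]^k:\mathrm{diam}(x)\le\delta\}$ with $\alpha=\max(0,v-\delta)$, $\beta=\min(T,u+\delta)$, and then apply your building–block volume formula; this reproduces exactly the paper's expression $\Sigma=(k+1)\delta^k+k\bigl(\min(u,T-\delta)-\max(v,\delta)\bigr)\delta^{k-1}$ after the identity $\beta-\alpha=\min(u,T-\delta)-\max(v,\delta)+2\delta$. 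For the outer integration your use of the joint law of the extremes and the substitution $s=v-u$ amounts to the paper's $Y,Z,W$ decomposition (their $Y,Z$ are your two boundary zones, their $W$ your middle zone). What your approach buys is a single reusable lemma in place of four ad hoc case computations; what the paper's approach buys is that each case is a completely elementary one–variable integral, so nothing is left implicit. Both proofs leave the final collapse of the $\delta^{L+k}$ coefficients into the closed form $h(L,k)$ as a mechanical simplification (the paper invokes Mathematica for it), and your consistency checks at $k=0$ and $k=L-1$ are the right sanity tests.
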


Once the behaviour of $X_{\mathcal{L}}$ under $\mathcal{H}_0$ is known, the method to construct an independence test is straight-forward. Suppose that $M$ independent and identically distributed (i.i.d.) trials are given. Denote $N_{i}^{\left(k\right)}$ the spike train corresponding to the neuron $i$ during the $k^{th}$ trial. As for the UE method, the idea is to compare two estimates of the expectation of $X_{\mathcal{L}}$. The first one is the empirical mean of $X_{\mathcal{L}}$:

\begin{equation}
\bar{m}_{\mathcal{L}}=\frac{1}{M}\sum_{k=1}^{M}X_{\mathcal{L}}^{\left(k\right)},
\end{equation}
where $X_{\mathcal{L}}^{\left(k\right)}$ is the delayed coincidence count during the $k^{th}$ trial. This estimate converges even if the processes are not independent. More precisely the following asymptotic result is given by the Central Limit Theorem
\[ \sqrt{M}\left(\bar{m}_{\mathcal{L}}- \mathbb{E}\left[X_{\mathcal{L}}\right] \right) \underset{M\rightarrow\infty}{\overset{\mathcal{D}}{\longrightarrow}}\mathcal{N}\left(0,\Var(X_{\mathcal{L}})\right) ,\]
where $\overset{\mathcal{D}}{\longrightarrow}$ denotes the convergence of distribution  and $\mathcal{N}(\mu,\sigma^2)$ denotes the Gaussian distribution with mean $\mu$ and variance $\sigma^2$.

The second estimate is given by Theorem \ref{thm:Moyenne et Esperance}. Indeed, under $\mathcal{H}_0$ the following equality holds
\[ \mathbb{E}\left[X_{\mathcal{L}}\right] =m_{0,\mathcal{L}}=\left(\prod_{ l\in\mathcal{L}}\lambda_{ l}\right)I(L,0).\] 
 Replacing each spiking intensity $\lambda_{l}$ by 
\[\hat{\lambda}_{l}:=\frac{1}{M(b-a)}\sum_{k=1}^{M} N_{l}^{(k)}\left([a,b]\right),\]
where $N_{l}^{(k)}\left([a,b]\right)$ denotes the number of spikes in $[a,b]$ for neuron $l$ during the $k^{th}$ trial, gives the following estimator,

\begin{equation}
\hat{m}_{0,\mathcal{L}}=\left(\prod_{ l\in\mathcal{L}}\hat{\lambda}_{ l}\right)I\left(L,0\right).
\end{equation}

Note that $\bar{m}_{\mathcal{L}}$ is always consistent (that is, converges towards the true parameter) whereas $\hat{m}_{0,\mathcal{L}}$ is consistent only under $\mathcal{H}_{0}$. This leads to the following independence test: the independence assumption is rejected when the difference between $\bar{m}_{\mathcal{L}}$ and $\hat{m}_{0,\mathcal{L}}$ is too large. More precisely, Theorem \ref{thm:Delta Methode} gives the asymptotic behaviour of $\sqrt{M}\left(\bar{m}_{\mathcal{L}}-\hat{m}_{0,\mathcal{L}}\right)$ under $\mathcal{H}_{0}$.

\begin{thm}
\label{thm:Delta Methode}Under the notations and assumptions of Theorem \ref{thm:Moyenne et Esperance}, and  under $\mathcal{H}_{0}$, the following affirmations are true 
\begin{itemize}
\item The following convergence of distribution holds:
\[
\sqrt{M}\left(\bar{m}_{\mathcal{L}}-\hat{m}_{0,\mathcal{L}}\right)\underset{M\rightarrow\infty}{\overset{\mathcal{D}}{\longrightarrow}}\mathcal{N}\left(0,\sigma^{2}\right),
\]
with
\[
\sigma^{2}=\Var(X_{\mathcal{L}})-(b-a)^{-1}\mathbb{E}\left[X_{\mathcal{L}}\right]^{2}\left(\sum_{l\in\mathcal{L}}\lambda_{ l}^{-1}\right).
\]

\item Moreover, $\sigma^{2}$ can be estimated by 
\[
\hat{\sigma}^{2}=\hat{v}\left(X_{\mathcal{L}}\right)-(b-a)^{-1}I(L,L)\prod_{ l\in\mathcal{L}}\hat{\lambda}_{ l}^{2}\left(\sum_{ l\in\mathcal{L}}\hat{\lambda}_{ l}^{-1}\right),
\]
where
\[
\hat{v}(X_{\mathcal{L}})=\hat{m}_{0,\mathcal{L}} +\sum_{k=1}^{L-1}
\left(
\sum_{\begin{subarray}{c}
\mathcal{J}\subset\mathcal{L}\\
\#\mathcal{J}=k
\end{subarray}}
\prod_{j\in\mathcal{J}}\hat{\lambda}_{j}^{2} \prod_{l\notin\mathcal{J}}\hat{\lambda}_{l}
\right)
I(L,k),
\]
and the following convergence of distribution holds:
\[
\sqrt{M}\frac{\left(\bar{m}_{\mathcal{L}}-\hat{m}_{0,\mathcal{L}}\right)}{\sqrt{\hat{\sigma}^{2}}}\overset{\mathcal{D}}{\rightarrow}\mathcal{N}\left(0,1\right).
\]

\end{itemize}
\end{thm}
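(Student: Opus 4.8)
The plan is to apply the (multivariate) delta method to the function that turns the vector of empirical intensities into the plug-in estimator $\hat m_{0,\mathcal{L}}$, jointly with the empirical mean $\bar m_{\mathcal{L}}$. First I would set up the right random vector: let $\hat\lambda=(\hat\lambda_l)_{l\in\mathcal{L}}$ and consider $Z_M=\big(\bar m_{\mathcal{L}},\,(\hat\lambda_l)_{l\in\mathcal{L}}\big)$, which is a function of the $M$ i.i.d.\ trials. Since each trial contributes $\big(X_{\mathcal{L}}^{(k)},(N_l^{(k)}([a,b])/(b-a))_{l\in\mathcal{L}}\big)$ with finite second moments (the coincidence count is bounded by a product of Poisson counts, all of which have moments of every order), the ordinary CLT gives $\sqrt M\,(Z_M-\mu)\overset{\mathcal D}{\to}\mathcal N(0,\Gamma)$ for some covariance matrix $\Gamma$, where $\mu=(\mathbb E[X_{\mathcal{L}}],(\lambda_l)_{l\in\mathcal{L}})$ under $\mathcal H_0$. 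One then writes $\bar m_{\mathcal{L}}-\hat m_{0,\mathcal{L}}=\phi(Z_M)$ with $\phi(y_0,(y_l)_l)=y_0-I(L,0)\prod_{l\in\mathcal{L}}y_l$, and applies the delta method: $\sqrt M\,(\phi(Z_M)-\phi(\mu))\overset{\mathcal D}{\to}\mathcal N(0,\nabla\phi(\mu)^\top\Gamma\,\nabla\phi(\mu))$. Under $\mathcal H_0$ we have $\phi(\mu)=\mathbb E[X_{\mathcal{L}}]-m_{0,\mathcal{L}}=0$, so the centering vanishes and the stated convergence follows once $\sigma^2=\nabla\phi(\mu)^\top\Gamma\,\nabla\phi(\mu)$ is identified.

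The computational heart of the argument is evaluating this asymptotic variance. The gradient is $\nabla\phi(\mu)=\big(1,\,-I(L,0)\prod_{l\neq l'}\lambda_l\big)_{l'\in\mathcal{L}}=\big(1,\,-\mathbb E[X_{\mathcal{L}}]/\lambda_{l'}\big)_{l'\in\mathcal{L}}$, using $m_{0,\mathcal{L}}=\mathbb E[X_{\mathcal{L}}]$ from Theorem \ref{thm:Moyenne et Esperance}. So $\sigma^2=\Var(X_{\mathcal{L}})-2\sum_{l\in\mathcal{L}}\frac{\mathbb E[X_{\mathcal{L}}]}{\lambda_l}\,\mathrm{Cov}\!\big(X_{\mathcal{L}},N_l([a,b])/(b-a)\big)+\sum_{l,l'}\frac{\mathbb E[X_{\mathcal{L}}]^2}{\lambda_l\lambda_{l'}}\,\mathrm{Cov}\!\big(N_l([a,b])/(b-a),N_{l'}([a,b])/(b-a)\big)$. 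Under $\mathcal H_0$ the processes are independent, so the last double sum collapses to the diagonal, where $\Var(N_l([a,b]))=\lambda_l(b-a)$ gives $\sum_{l}\frac{\mathbb E[X_{\mathcal{L}}]^2}{\lambda_l^2}\cdot\frac{\lambda_l(b-a)}{(b-a)^2}=(b-a)^{-1}\mathbb E[X_{\mathcal{L}}]^2\sum_l\lambda_l^{-1}$. The remaining task is the cross-covariance $\mathrm{Cov}(X_{\mathcal{L}},N_l([a,b]))$: write $X_{\mathcal{L}}$ as a sum over $L$-tuples of points, one coordinate from each $N_{l'}$, and $N_l([a,b])$ as a sum over single points of $N_l$; expanding the product and using the Mecke/Campbell formula for Poisson processes (independence across neurons under $\mathcal H_0$), the contributing terms are those where the single point coincides with the $l$-th coordinate of the tuple, which produces exactly $(b-a)\,\mathbb E[X_{\mathcal L}]/\lambda_l$ per index $l$ — I would do this one moment computation carefully, in the same spirit as the proof of Theorem \ref{thm:Moyenne et Esperance}. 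Summing, the middle term contributes $-2(b-a)^{-1}\mathbb E[X_{\mathcal{L}}]^2\sum_l\lambda_l^{-1}$, and combining the three pieces yields $\sigma^2=\Var(X_{\mathcal{L}})-(b-a)^{-1}\mathbb E[X_{\mathcal{L}}]^2\sum_{l\in\mathcal{L}}\lambda_l^{-1}$, as claimed. I expect this cross-covariance bookkeeping to be the main obstacle, since one must correctly handle diagonal versus off-diagonal contributions in the multi-index sum.

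For the second bullet, the plan is to note that $\hat\lambda_l\overset{\mathbb P}{\to}\lambda_l$ by the law of large numbers, and that $\hat\sigma^2$ is obtained from $\sigma^2$ by the substitution $\lambda_l\mapsto\hat\lambda_l$ in every factor (the expression $\hat v(X_{\mathcal{L}})$ being exactly the plug-in version of $\Var(X_{\mathcal L})$ via Theorem \ref{thm:Moyenne et Esperance}, and $I(L,L)\prod_l\hat\lambda_l^2\sum_l\hat\lambda_l^{-1}$ the plug-in of $\mathbb E[X_{\mathcal L}]^2\sum_l\lambda_l^{-1}$, using $\mathbb E[X_{\mathcal L}]^2=(\prod_l\lambda_l)^2 I(L,0)^2$ — here one should check $I(L,0)^2=I(L,L)$, which is immediate from the formulas in Proposition \ref{prop:Formule des I(k)} since $I(L,0)=L(b-a)\delta^{L-1}-(L-1)\delta^L$ and $I(L,L)=\big(L(b-a)\delta^{L-1}-(L-1)\delta^L\big)^2$). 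Since $\sigma^2$ is a continuous function of $(\lambda_l)_{l\in\mathcal{L}}$ and (assuming $\sigma^2>0$) bounded away from $0$ near the true value, the continuous mapping theorem gives $\hat\sigma^2\overset{\mathbb P}{\to}\sigma^2>0$, and Slutsky's lemma applied to the first bullet yields $\sqrt M\,(\bar m_{\mathcal{L}}-\hat m_{0,\mathcal{L}})/\sqrt{\hat\sigma^2}\overset{\mathcal D}{\to}\mathcal N(0,1)$.
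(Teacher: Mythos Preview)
Your plan is correct and matches the paper's proof: multivariate CLT on the per-trial vector $(X_{\mathcal L},\,N_l([a,b]))_{l\in\mathcal L}$, delta method with $g(x,(u_l))=x-(b-a)^{-L}I(L,0)\prod_l u_l$, explicit evaluation of $\nabla g^{\top}\Gamma\,\nabla g$, then Slutsky for the plug-in variance; the identity $I(L,L)=I(L,0)^2$ that you check from Proposition~\ref{prop:Formule des I(k)} is also used (and in fact holds directly from the definition of $I(L,k)$). One slip to correct when you write the details: the diagonal contribution gives $\mathrm{Cov}\big(X_{\mathcal L},N_l([a,b])\big)=\mathbb E[X_{\mathcal L}]$, not $(b-a)\,\mathbb E[X_{\mathcal L}]/\lambda_l$ --- with this value your three pieces of the quadratic form combine to the stated $\sigma^2$ exactly as you claim.
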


The proof  of this theorem relies on a standard application of the delta method \citep{Casella2002} and is given in Appendix \ref{app: delta method}.  The delta method is useful in order to deal with the plug-in step, i.e. the substitution of the real parameters by the estimated ones.

Note that the results obtained in Theorems \ref{thm:Moyenne et Esperance}
and \ref{thm:Delta Methode} are true for more general delayed coincidence counts. A more general result and its proof are given in Appendix. However when one considers more general ways to count coincidences the integrals $I(L,k)$ are harder to compute.\\

\subsection{ Independence test}

The results obtained in Theorem \ref{thm:Delta Methode} allow us to straightforwardly build a test for detecting a dependency between neurons:

\begin{defin}[The GAUE test]
\label{test}
For $\alpha$ in $\left]0,1\right[$, denote $z_{\alpha}$ the $\alpha$-quantile of the standard Gaussian distribution $\mathcal{N}(0,1)$. Then the symmetric test of level $\alpha$ rejects $\mathcal{H}_0$ when $\bar{m}$ and $\hat{m}_{0,\mathcal{L}}$ are too different, that is when \[\left| \sqrt{M}\frac{\left(\bar{m}_{\mathcal{L}}-\hat{m}_{0,\mathcal{L}}\right)}{\sqrt{\hat{\sigma}^{2}}}\right| >  z_{1-\alpha/2}.\]

\end{defin}
Note that once a subset is rejected by our test, one can determine if the dependency is rather excitatory or inhibitory according to the sign of $\bar{m}_{\mathcal{L}}-\hat{m}_{0,\mathcal{L}}$. If $\bar{m}_{\mathcal{L}}-\hat{m}_{0,\mathcal{L}}> 0$ (\textit{respectively $< 0$}) then the dependency is rather excitatory (\textit{respectively inhibitory}).\\

The result of a test may be wrong in two distinct manners. On the one hand, a false positive is an error in which the test is incorrectly rejecting the null hypothesis. On the other hand, a \emph{false negative} is an error in which the test is incorrectly accepting the null hypothesis. The false positive (respectively negative) rate is the test's probability that a false positive (resp. negative) occurs. Usually, a theoretical control is given only for the false positive rate which is considered as the worst error. The following corollary is an immediate consequence of Theorem \ref{thm:Delta Methode} and states the appropriateness of the GAUE test.

\begin{cor}\label{cor:good:level}
Under assumptions of Theorem \ref{thm:Delta Methode}, the test of level $\alpha$ presented in Definition \ref{test} is asymptotically of false positive rate $\alpha$. 
That is, the false positive rate of the test tends to $\alpha$ when the sample size $M$ tends to infinity.
\end{cor}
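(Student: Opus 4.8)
The plan is to rewrite the false positive rate of the GAUE test directly in terms of the studentised statistic
\[ T_M := \sqrt{M}\,\frac{\bar{m}_{\mathcal{L}}-\hat{m}_{0,\mathcal{L}}}{\sqrt{\hat{\sigma}^{2}}}, \]
and then to exploit the convergence $T_M \overset{\mathcal{D}}{\to}\mathcal{N}(0,1)$ established in the second part of Theorem \ref{thm:Delta Methode}. Under $\mathcal{H}_0$ the probability of a false positive is exactly $\mathbb{P}_{\mathcal{H}_0}\big(|T_M| > z_{1-\alpha/2}\big)$, so the corollary amounts to showing that this quantity converges to $\alpha$ as $M\to\infty$, for any fixed null configuration of intensities.

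I would first dispatch the well-posedness issue: $T_M$ is only defined on the event $\{\hat{\sigma}^{2}>0\}$. Since $\hat{\sigma}^{2}$ is a continuous function of the empirical intensities $\hat{\lambda}_{l}$, which converge almost surely to the true intensities $\lambda_{l}$ by the law of large numbers, one has $\hat{\sigma}^{2}\to\sigma^{2}$ in probability; and $\sigma^{2}>0$ under A\ref{ass: Poisson}--A\ref{ass: Partition homogene} (this non-degeneracy is already what makes the limiting law in Theorem \ref{thm:Delta Methode} a genuine Gaussian). Consequently $\mathbb{P}(\hat{\sigma}^{2}>0)\to 1$, the rejection event is defined on a set of probability tending to one, and the behaviour of $T_M$ on the negligible complementary event is irrelevant for the limit.

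The core step is then a routine application of the Portmanteau theorem. Writing $\Phi$ for the standard Gaussian c.d.f., which is continuous on all of $\mathbb{R}$, the points $\pm z_{1-\alpha/2}$ are continuity points of $\Phi$, so convergence in distribution yields $\mathbb{P}(T_M\le z_{1-\alpha/2})\to\Phi(z_{1-\alpha/2})$ and $\mathbb{P}(T_M<-z_{1-\alpha/2})\to\Phi(-z_{1-\alpha/2})$. Hence
\[ \mathbb{P}\big(|T_M|>z_{1-\alpha/2}\big)=1-\mathbb{P}\big(T_M\le z_{1-\alpha/2}\big)+\mathbb{P}\big(T_M<-z_{1-\alpha/2}\big)\underset{M\to\infty}{\longrightarrow}1-\Phi\big(z_{1-\alpha/2}\big)+\Phi\big(-z_{1-\alpha/2}\big). \]
By the definition of the quantile, $\Phi(z_{1-\alpha/2})=1-\alpha/2$, and by symmetry $\Phi(-z_{1-\alpha/2})=\alpha/2$, so the right-hand side equals $\alpha/2+\alpha/2=\alpha$, which is the claim.

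I do not anticipate any serious obstacle once Theorem \ref{thm:Delta Methode} is granted: the argument is essentially just "convergence in distribution plus continuity of the limiting c.d.f. implies convergence of tail probabilities". The only points deserving a line of care are the non-degeneracy $\sigma^{2}>0$ — needed both to legitimise the studentisation and to guarantee that the limit law is atomless so that $\pm z_{1-\alpha/2}$ are continuity points — and, if one wants a uniform statement over the composite null $\mathcal{H}_0$, a remark that the convergence above is pointwise in the intensities, which is all the corollary asserts.
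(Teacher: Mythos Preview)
Your proposal is correct and follows exactly the approach the paper intends: the paper does not give a separate proof but simply notes that the corollary ``is an immediate consequence of Theorem~\ref{thm:Delta Methode}''. Your argument spells out this immediate consequence carefully---convergence in distribution of $T_M$ to $\mathcal{N}(0,1)$ plus continuity of $\Phi$ at $\pm z_{1-\alpha/2}$---and even adds a useful remark on the well-posedness of the studentisation via $\hat\sigma^2\to\sigma^2>0$, which the paper leaves implicit.
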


\newpage

\section{Illustration Study: Poissonian Framework}
\label{sec:Simulations}

In this section, an illustration of the previous theoretical results is given. To obtain a global evaluation of the performance of the different methods, some parameters can randomly fluctuate. More precisely, the following procedure is applied,

$\left. \text{\parbox{0.9\linewidth}{
\begin{itemize}
\item[1.] Generate a set of random parameters according to the appropriate Framework;
\item[2.] Use this set to generate $M$ trials;
\item[3.] Compute the different statistics;
\item[4.] Repeat steps 1 to 3 a thousand times.
\end{itemize}
}} \right \}$ {\bf $\textrm{P}$}

We begin by an illustration of the results of Theorem \ref{thm:Delta Methode} and Corollary \ref{cor:good:level}, and a comparison with the original UE method.

\subsection{Illustration of the asymptotic properties}
\label{sec:simu}

The control on the false positive rate of our test being only asymptotic, it is evaluated on simulations in this Section. Moreover, it is shown that our test is empirically \emph{conservative}, that is, when constructed for a prescribed level, say $\alpha$, the empirical false positive rate is less than $\alpha$. We simulate independent Poisson processes under the following Framework ({\bf $\textrm{F}_1$}) :

$\left. \text{\parbox{0.9\linewidth}{
\begin{itemize}
\item the trial duration ($b-a$) is randomly selected (uniform distribution) between $0.2$s and $0.4$s;
\item the $n=4$ neurons are simulated with different intensities. Each one is randomly selected (uniform distribution) between 8 and 20Hz;
\item the set of tested neurons is given by ${\cal L}=\left\{ 1,2,3,4\right\}$;
\end{itemize}
}} \right \}$ {\bf $\textrm{F}_1$}

Moreover, we set once and for all $\delta=0.01s$. Note that the dependence with respect to the parameter $\delta$ has been fully discussed in \citep{Albert2014}.\\

\begin{figure}[h]
\centering{}\includegraphics[scale=0.5]{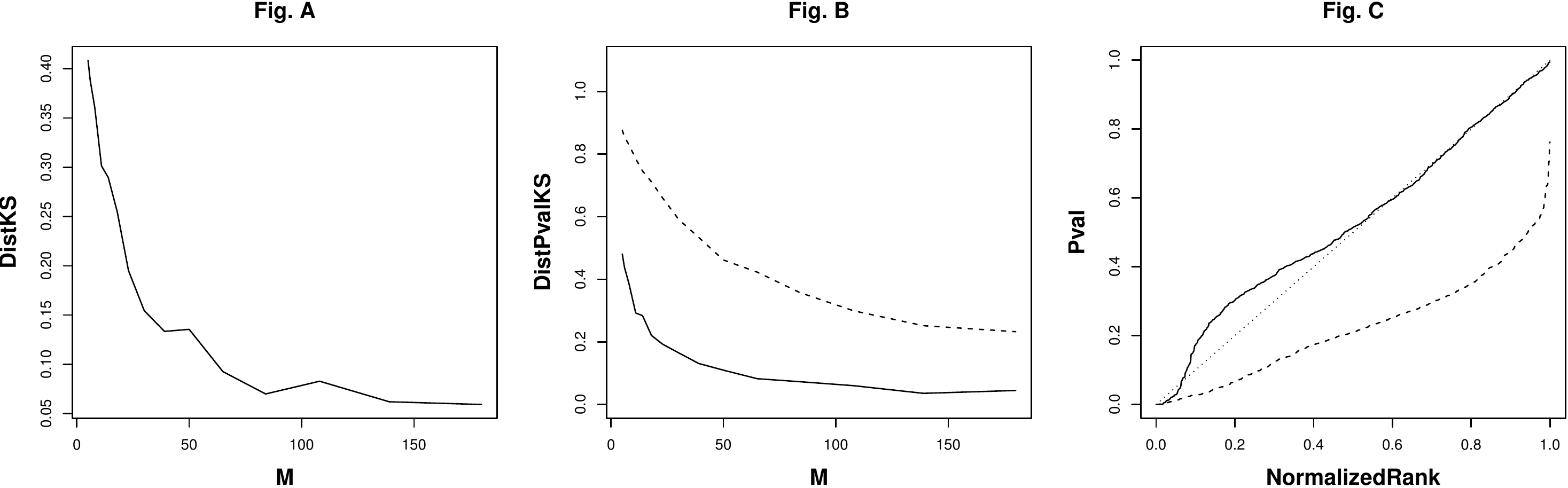}
\caption{\label{GraphPoisIndep} Under Framework {\bf $\textrm{F}_1$} (Independence assumption). {\bf Figure A.} Evolution of the Kolmogorov distance (in function of the number of trials) averaged on 1000 simulations between the empirical distribution function of the test statistics and the standard Gaussian distribution function. {\bf Figure B.} Evolution of the Kolmogorov distance averaged on 1000 simulations between the empirical distribution function of the p-values and the uniform distribution function with respect to the number of trials. The plain line stands for our test and the dashed line for the original UE one. {\bf Figure C.} Graphs of the sorted 1000 p-values (for 50 trials) in function of their normalized rank under $\mathcal{H}_0$. The plain line stands for our test, the dashed line for the original UE one and the dotted line for the uniform distribution function.}
\end{figure}

Considering $M$ independent trials of $n$ point processes, the asymptotic (with respect to $M$) of the delayed coincidence count is studied. To this aim, we use a Monte Carlo method following the procedure {\bf $\textrm{P}$} presented at the beginning of Section \ref{sec:Simulations}. On each simulation, $M$ independent trials are generated and the statistic $S_i=\sqrt{M}\left(\bar{m}_{\mathcal{L},i}-\hat{m}_{0,i}\right)/\sqrt{\hat{\sigma}_i^{2}}$ (for i from $1$ to $1000$) is computed. Theorem \ref{thm:Delta Methode} tells us that the random variables $S_i$ should be asymptotically distributed as the standard Gaussian distribution. Thus, we plot (Figure \ref{GraphPoisIndep}.A) the Kolmogorov distance $KS(F_{M,1000},F)$ between the empirical distribution function over the 1000 repetition $F_{M,1000}$ and the standard Gaussian distribution function $F$: 
\[KS(F_{M,1000},F)=\sup_x |F_{M,1000}(x)-F(x)|.\]

Usually, a test of level $\alpha=0$ always accepts, whereas a test of level $\alpha=1$ always rejects. Hence, there is a critical value (depending on the observations, and called p-value) for which the test decision passes from acceptance to rejection. If the false positive rate of a test of level $\alpha$ is exactly $\alpha$ for all $\alpha$ in $[0,1]$, which should asymptotically be the case according to Corollary \ref{cor:good:level}, then one can prove that the corresponding p-value is uniformly distributed on $[0, 1]$ under the null hypothesis. Thus, the evolution (with respect to $M$) of the Kolmogorov distance between the empirical distribution function of the obtained p-values (with our test and the one given by the UE method) and the uniform distribution function is plotted for symmetric tests (See Figure \ref{GraphPoisIndep}.B). It appears that the rate of convergence of the empirical distribution function of the p-values is faster for our test than the one given by the UE method.\\

From Figures \ref{GraphPoisIndep}.A and B, it seems reasonable to consider, for our test, sample sizes $M$ greater than $50$. Indeed, one sees that the distribution of our statistic is then almost Gaussian and the distribution of the p-values almost uniform (as expected under the null hypothesis). Thus, in order to describe more precisely what happens, we plot in Figure \ref{GraphPoisIndep}.C the sorted p-values in function of their normalized rank for $M=50$. Note that if the curve of sorted p-values is below (respectively above) the diagonal, then the observed p-values are globally smaller (respectively greater) than they should be under $\mathcal{H}_0$. Our test seems to be conservative except for big or very small p-values. The problem induced by this non conservativeness for very small p-values is detailed at the end of Section \ref{sec:Hawkes}. On the other side, the false positive rate observed for the UE test is too high. For example, we see in the figure that the UE test with a theoretical test level of $5$\% rejects almost $20$\% of the cases.\\

\subsection{Parameter Scan}
\label{sec:ParameterScan}

Here is illustrated the influence of the parameters $\lambda$ (the firing rate) and $b-a$ (the trial duration). We plot in Figure \ref{ParameterScan} the evolution (with respect to $M$) of the Kolmogorov distance between the empirical distribution function of the obtained p-values (with our test and the one given by the UE method) and the uniform distribution function.

\begin{figure}[h]
\centering{}\includegraphics[scale=0.3]{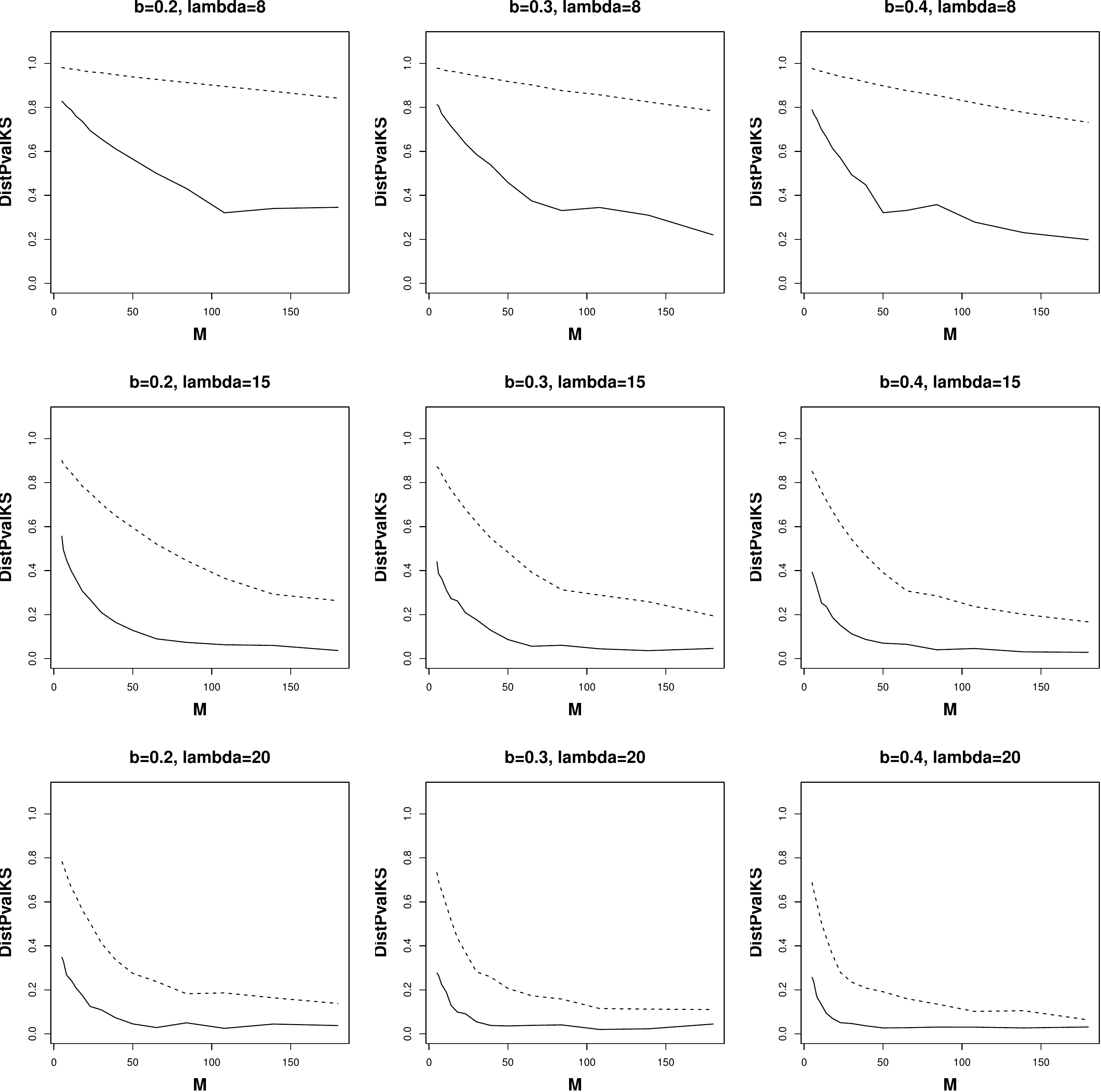}
\caption{\label{ParameterScan} Under Framework {\bf $\textrm{F}_1$} (Independence assumption). Evolution of the Kolmogorov distance averaged on 1000 simulations between the empirical distribution function of the p-values and the uniform distribution function with respect to the number of trials. The plain line stands for our test and the dashed line for the original UE one. Each plot stands for different values of $\lambda$ and $b$ (we set $a=0$ so that $b$ gives the trial duration $b-a$). From top to bottom $\lambda$ takes the values 8, 15 and 20Hz. From left to right, $b$ takes the values 0.2, 0.3 and 0.4s.}
\end{figure}

First of all, note that, if the Kolmogorov distance between the empirical distribution function of the obtained p-values and the uniform distribution function tends to $0$, then it means that the false positive rate of the test of level $\alpha$ tends to $\alpha$ when the sample size tends to infinity. As predicted by Corollary~\ref{cor:good:level}, the Kolmogorov distance between the empirical distribution function of the obtained p-values with our test tends to $0$ fast enough if $\lambda$ is not too small ($\lambda\geq 15$Hz). The test induced by the UE method seems to share the same asymptotic behaviour, but with a slower rate of convergence. Finally, it seems that our method performs better than the UE method in all the configurations of parameters.\\

In order to describe more precisely what happens, we plot in Figure \ref{ParameterScan_bis} the sorted p-values in function of their normalized rank (for $M=50$). As expected in regard of Figure \ref{ParameterScan}, the plain line sticks to the first diagonal when the parameters are large enough, since, in those cases, the KS distance between the empirical distribution function of the p-values of our test and the uniform distribution function was already small for $M=50$. However, the test induced by the UE method does not respect the prescribed level even when the parameters are large. Indeed, the dashed line remains under the diagonal in all cases. Thus, even if the asymptotic level of the UE method is good, in the practical cases where the sample size is small, the false positive rate is not guaranteed.

\begin{figure}[h]
\centering{}\includegraphics[scale=0.3]{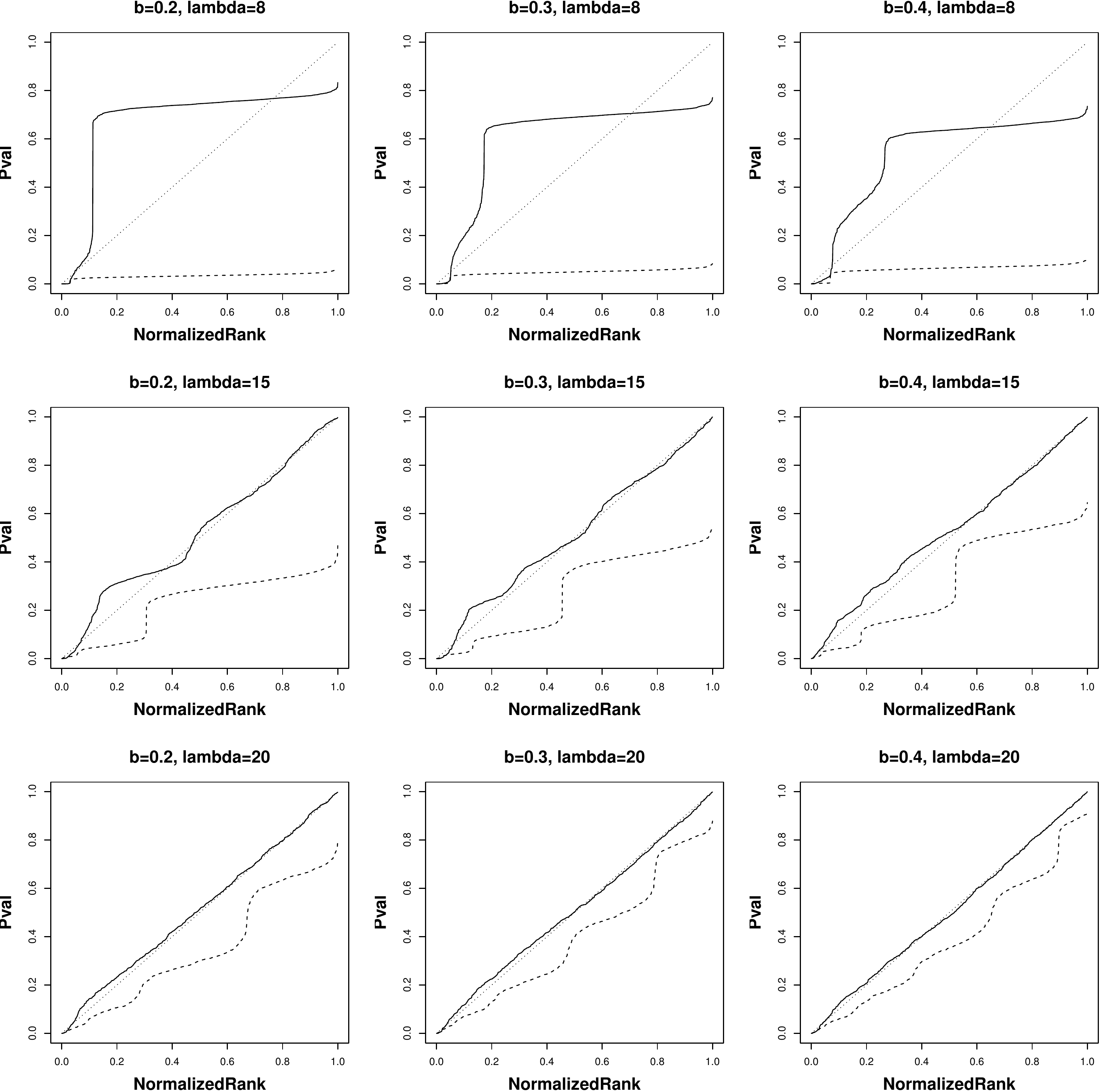}
\caption{\label{ParameterScan_bis} Under Framework {\bf $\textrm{F}_1$} (Independence assumption). Graphs of the sorted 1000 p-values (for 50 trials) in function of their normalized rank under $\mathcal{H}_0$. The plain line stands for our test, the dashed line for the original UE one and the dotted line for the uniform distribution function. Each plot stands for different values of $\lambda$ and $b$ (we set $a=0$ so that $b$ gives the trial duration $b-a$). From top to bottom $\lambda$ takes the values 8, 15 and 20Hz. From left to right, $b$ takes the values 0.2, 0.3 and 0.4s.}
\end{figure}

\clearpage

\subsection{Illustration of the true positive rate}

First, let us note that the \emph{true positive rate} of a test is the test's probability of correctly rejecting the null hypothesis. No theoretical result on this rate can be obtained from Theorem~\ref{thm:Delta Methode} who deals only with the false positive rate. So, in order to evaluate the true positive rate of the test, we simulate a sample which is dependent and check how many times the test rejects $\mathcal{H}_0$. \\

To obtain dependent Poisson processes an injection model inspired by the one used in \citep{Grun2002,Grun1999} or \citep{MTGAUE} is  used. Consider independent homogeneous Poisson processes $\overline{N}_{1},\dots,\overline{N}_{n}$, drawn according to Framework {\bf $\textrm{F}_1$}. Then, simulate an other Poisson process (according to the same framework but independent from the previous ones) $\tilde{N}$,  with an intensity of 0.3Hz, which is injected  to every neuron. Thus our sequence of dependent Poisson processes is given by 
\[ N_{i}=\overline{N}_{i} \cup{\tilde{N}}.\]
This new framework ({\bf $\textrm{F}_1$} completed by the injection) is referred as Framework {\bf $\textrm{F}_2$}.\\

Note that this injection model can only model excess of coincidences and not lack of coincidences. In the injection model used in \citep{Grun1999}, a small jitter is applied before injection to mimic temporal imprecision of the synchronous event. In our Poissonian framework this jitter cannot be performed in a similar way. Indeed, this jitter does not preserve the stationariness of the Poisson process near the edges. Although some other more elaborate injection models are available in the Poissonian framework, we do not use one of them here because their translation in the discrete time framework is not clear.

\begin{figure}[h]
\centering{}\includegraphics[scale=0.5]{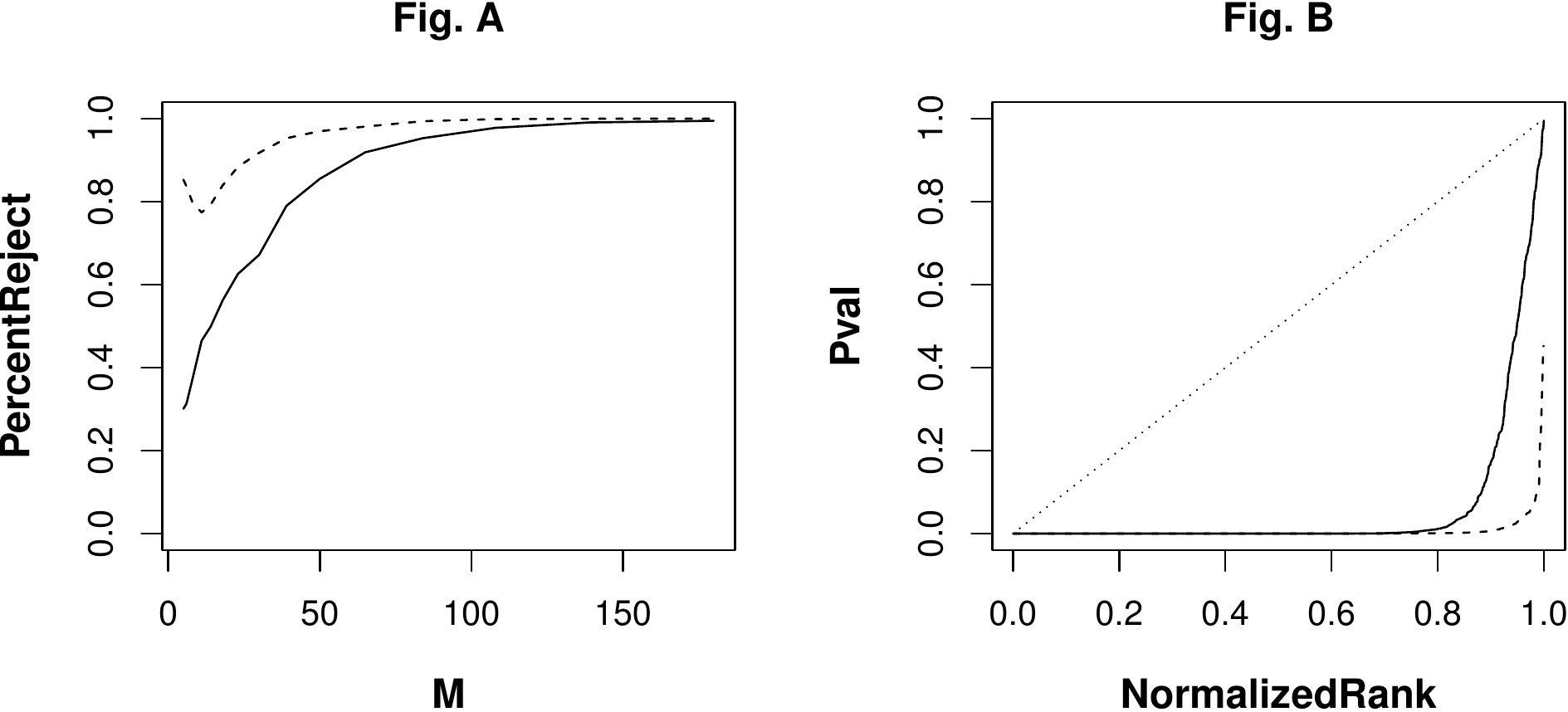}\caption{\label{GraphPoisDep} Under Framework {\bf $\textrm{F}_2$} (Dependence assumption). {\bf Figure A.} Illustration of the true positive rate of the test,  for a theoretical test level of $5$\%. The curves represent the evolution, with respect to the number of trials, of the true positive rate (averaged on 1000 simulations).   The plain line stands for our test and the dashed line for the original UE one. {\bf Figure B.} Graphs of the sorted 1000 p-values for dependent Poisson processes (50 trials). The plain line stands for our test, the dashed line for the original UE one and the dotted line for the uniform distribution function.}
\end{figure}

For a fixed theoretical level of $\alpha=5$\%, Figure \ref{GraphPoisDep}.A illustrates the true positive rate of the two tests in function of the number of trials $M$. Then Figure \ref{GraphPoisDep}.B represents the p-values as a function of their normalized rank, for $M=50$. As in Figure \ref{GraphPoisIndep}.C, the lower the curve is, the greater the observed frequency of rejection of the null hypothesis. The true positive rate is higher for the UE method for small sample sizes, but this is at the price of an  undervalued theoretical level. Indeed, we saw previously (Section \ref{sec:simu}) that the UE test gives too much false positives (for $M=50$, $20\%$ of rejection under the null hypothesis with a theoretical test level of $5\%$).

\section{Illustration Study: Non-Poissonian framework}
\label{sec:Hawkes}
In this section, a more  neurobiologically realistic framework than the Poisson one is considered. Indeed, it is interesting to see if our test is still reliable when the Poisson framework is not valid any-more. Our test is confronted to multivariate Hawkes processes, which can be simulated thanks to Ogata's thinning method \citep{Ogata81} inspired by \citep{Lewis_Simul}. The use of Hawkes processes in neurobiology was first introduced in \citep{Chornoboy1988}. With the development of simultaneous neuron recordings there is a recent trend in favour of Hawkes processes  for modelling spike trains (\citep{Pillow_nature,Krumin2010,Pernice2011,Pernice2012,MTGAUE}). Furthermore, Hawkes processes have passed some goodness-of-fit tests on real data \citep{reynaud2014goodness}. In this model, interaction between two neurons can be easily and in a more realistic way inserted. This is one of the reasons of this trend.  Note that the homogeneous Poisson process is a particular case of Hawkes processes, with no interaction between neurons.\\

A counting process $N$ is characterized by its conditional intensity $\lambda_{t}$ which is related with the local probability of finding a new point given the past. (Informally, the quantity  $\lambda_{t}dt$ gives the probability that a new point on $N$ appears in $[t, t+dt]$ given the past). The process $\left(N^{i}\right)_{i=1\ldots n}$ 
is a multivariate Hawkes process if there exist some functions $\left(h_{ij}\right)_{i,j=1\ldots n}$ (called interaction functions) and some positive constants $\left(\mu_{i}\right)_{i=1\ldots n}$ (spontaneous intensities) such that, for all $j=1,\dots ,n$, $\lambda^{j}$ given by
\[
\lambda_{t}^{j}=\max\left(0,\mu_{j}+\sum_{i=1}^n\int_{s<t}h_{ij}\left(t-s\right)\, N^{i}\left(ds\right)\right)
\]
is the intensity of the point process $N^j$, where $N^i(ds)$ is the point measure associated to $N^i$, that is \linebreak $N^i(ds)=\sum_{T\in N^i} \delta_{T}(ds)$ where $\delta_{T}$ is the Dirac measure at point $T$.\\

The functions $h_{ij}$ represent the influence of neuron $i$ over neuron $j$ in terms of spiking intensity. This influence can be either exciting ($h\geq 0$) or inhibiting ($h\leq 0$). For example, suppose that $h_{ij}=\beta\mathbf{1}_{[0,x]}$. If $\beta>0$ ({\it respectively $\beta<0$}) then the apparition of a spike on $N^i$ increases ({\it respectively decreases}) the probability to have a spike on $N^j$  during a short period of time (namely $x$): neuron $i$ excites ({\it respectively inhibits}) neuron $j$. The processes $N^i$ for $i=1,\dots ,n$ are independent if and only if $h_{ij} = 0$ for all $i \neq j$.\\ 

Note also that the self-interaction functions $h_{jj}$ can model refractory periods, making the Hawkes model more realistic than Poisson processes, even in the independence case. In particular when $h_{jj} = - \mu_j\mathbf{1}_{[0,x]}$ , all the other interaction functions being null, the $n$-dimensional process is composed by $n$ independent Poisson processes with dead time $x$, modelling strict refractory periods of length $x$ \citep{Reimer2012}.\\

All the following tests are computed according to the Framework {\bf $\textrm{F}_3$} below:

$\left. \text{\parbox{0.9\linewidth}{
\begin{itemize}
\item the trial duration of $b-a$ is randomly selected (uniform distribution) between $0.2$ and $0.4$s;
\item the $n=4$ neurons are simulated with spontaneous intensity $\mu_1,\hdots,\mu_4$ randomly selected (uniform distribution) between $8$ and $20$Hz;
\item the non-positive auto interaction functions are given by $h_{i,i}=-\mu_i\mathbf{1}_{[0,0.003s]}$;
\item the set of tested neurons is given by ${\cal L}=\left\{ 1,2,3,4\right\} $;
\end{itemize}
}} \right \}$ {\bf $\textrm{F}_3$}

 We also performed a parameter scan. However, since the results are equivalent to those obtained in the Poissonian framework, they are not presented here.\\

\subsection{Illustration of the level}

Before all, one wants to know if Theorem \ref{thm:Delta Methode} and Corollary \ref{cor:good:level} are still reliable for Hawkes processes. Thus as in section \ref{sec:Simulations}, Figure \ref{GraphHawkesIndep}.A shows the evolution of the $KS$ distance between $F_{M,1000}$ and $F$. Then as in Section \ref{sec:Simulations}, we look at the KS distance between the empirical distribution function of the p-values and the uniform distribution function to see if one can trust the level of the different tests (Figure \ref{GraphHawkesIndep}.B). These two figures are pretty similar to Figures \ref{GraphPoisIndep}.A and B (Poissonian case), but with a slightly slower convergence rate (with respect to $M$). Finally, Figure \ref{GraphHawkesIndep}.C plays the same role as Figure \ref{GraphPoisIndep}.C and presents the sorted p-values in function of their normalized rank (for $M=50$). Again, the results are comparable to those obtained in the Poissonian case: our test is rather conservative whereas the UE test rejects too many cases.

\begin{figure}[!h]
\centering{}\includegraphics[scale=0.5]{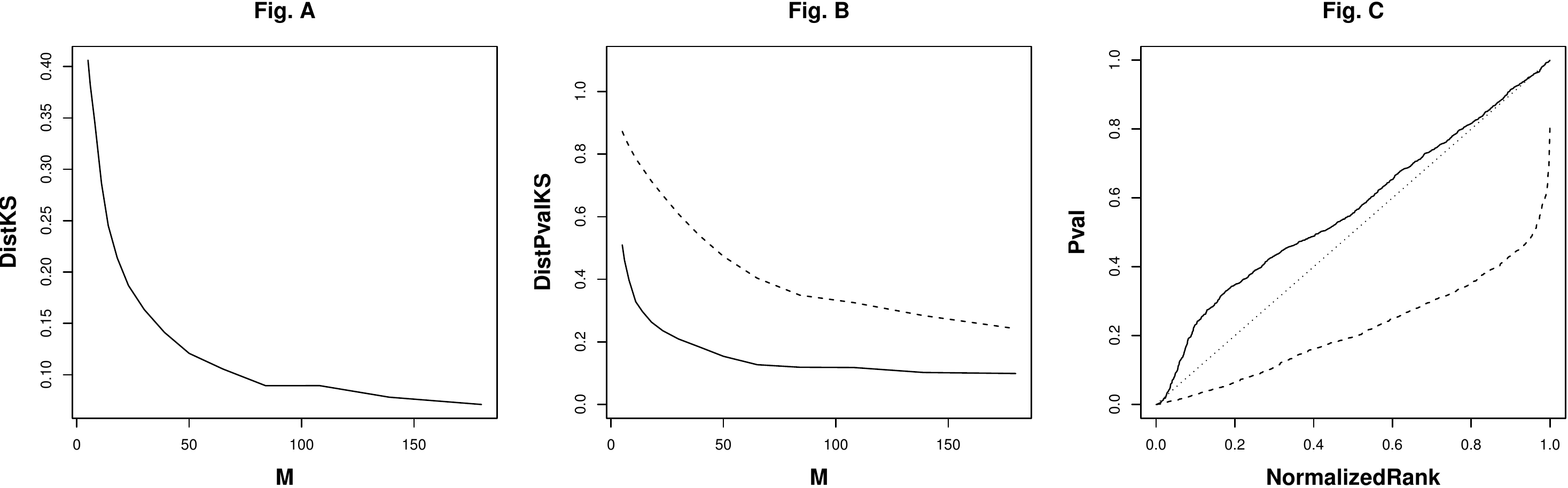}\caption{\label{GraphHawkesIndep} Under Framework {\bf $\textrm{F}_3$} (Independence assumption). {\bf Figure A.} Evolution of the Kolmogorov distance (in function of the number of trials) averaged on 1000 simulations between the empirical distribution function of the test statistics and the standard Gaussian distribution function. {\bf Figure B.} Evolution of the Kolmogorov distance averaged on 1000 simulations between the empirical distribution function of the p-values and the uniform distribution function with respect to the number of trials. The plain line stands for our test and the dashed line for the original UE one. {\bf Figure C.} Graphs of the sorted 1000 p-values (for 50 trials) in function of their normalized rank under $\mathcal{H}_0$. The plain line stands for our test, the dashed line for the original UE one and the dotted line for the uniform distribution function.}
\end{figure}

\subsection{Illustration of the true positive rate}

As said previously, it is more realistic to introduce dependency between Hawkes processes than Poisson processes. Still considering Framework {\bf $\textrm{F}_3$}, interaction functions $h_{i,j}=\beta\mathbf{1}_{[0,0.005s]}$, $\beta$ being randomly selected between 20 and 30 Hz, are added. More precisely, we add five interaction functions: $h_{1,3}$, $h_{2,3}$, $h_{1,4}$, $h_{2,4}$ and $h_{3,4}$ (summarized in Figure \ref{fig:Graphe dep loc}). Moreover, the auto interactions are updated to preserve strict refractory periods : $h_{i,i}=-(\mu_i+m_i.\beta)\mathbf{1}_{[0,0.003s]}$, where $m_i$ is the number of neurons exciting neuron $i$ (for example, $m_4=3$). This new framework ({\bf $\textrm{F}_3$} completed by the five interaction function) is referred as Framework {\bf $\textrm{F}_4$}. 

\begin{figure}[h]
\centering{}
\includegraphics[scale=0.3]{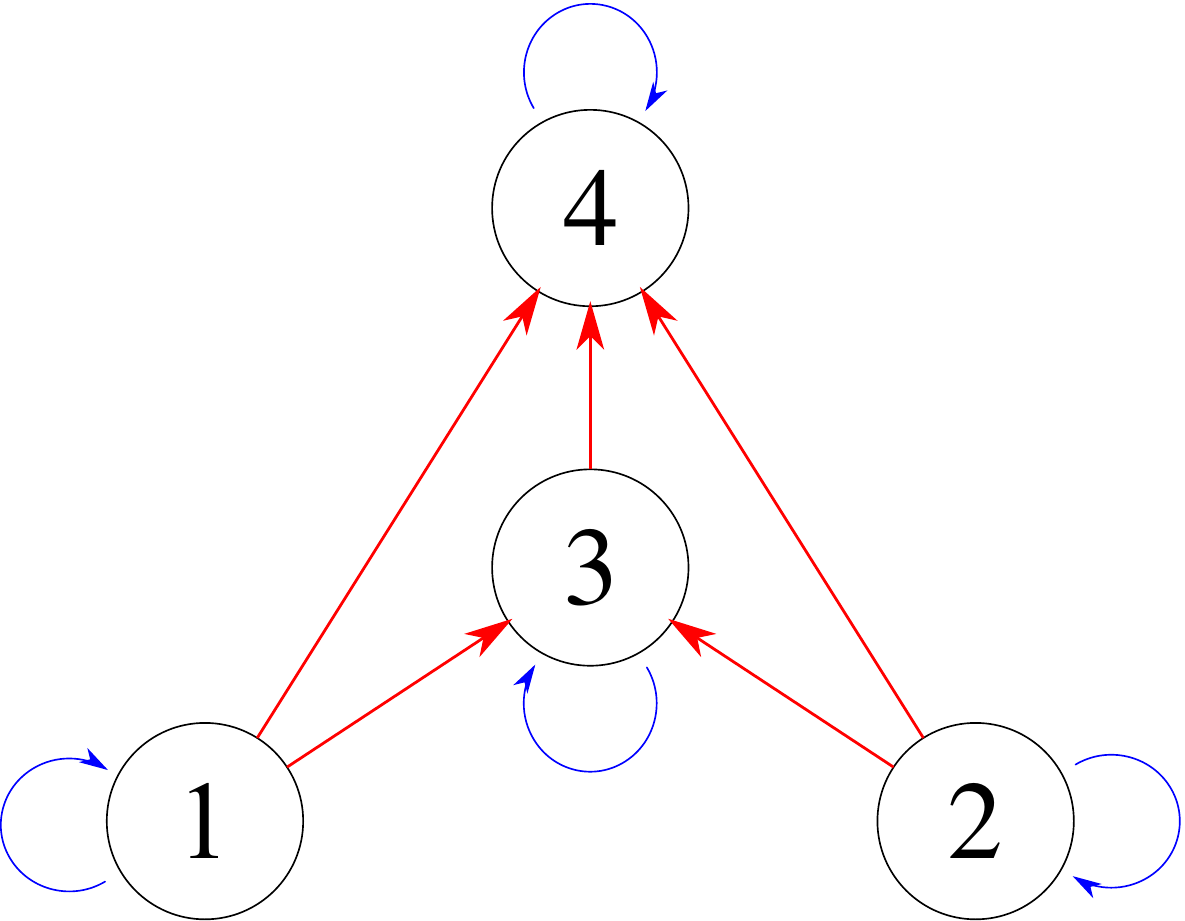}
\caption{\label{fig:Graphe dep loc}Local independence graph. An arrow means a non null interaction function. Blue arrow means inhibition and red arrow means excitation.}
\end{figure}

As previously we first provide an illustration of the true positive rate of the two tests, associated to a theoretical level of $5$\%, in function of $M$ (Figure \ref{GraphHawkesDep}.A). Then Figure \ref{GraphHawkesDep}.B represents the p-values in function of their normalized rank, for $M=50$. The difference between the true positive rates is smaller than in the Poissonian Case.

\begin{figure}[h]
\centering{}\includegraphics[scale=0.5]{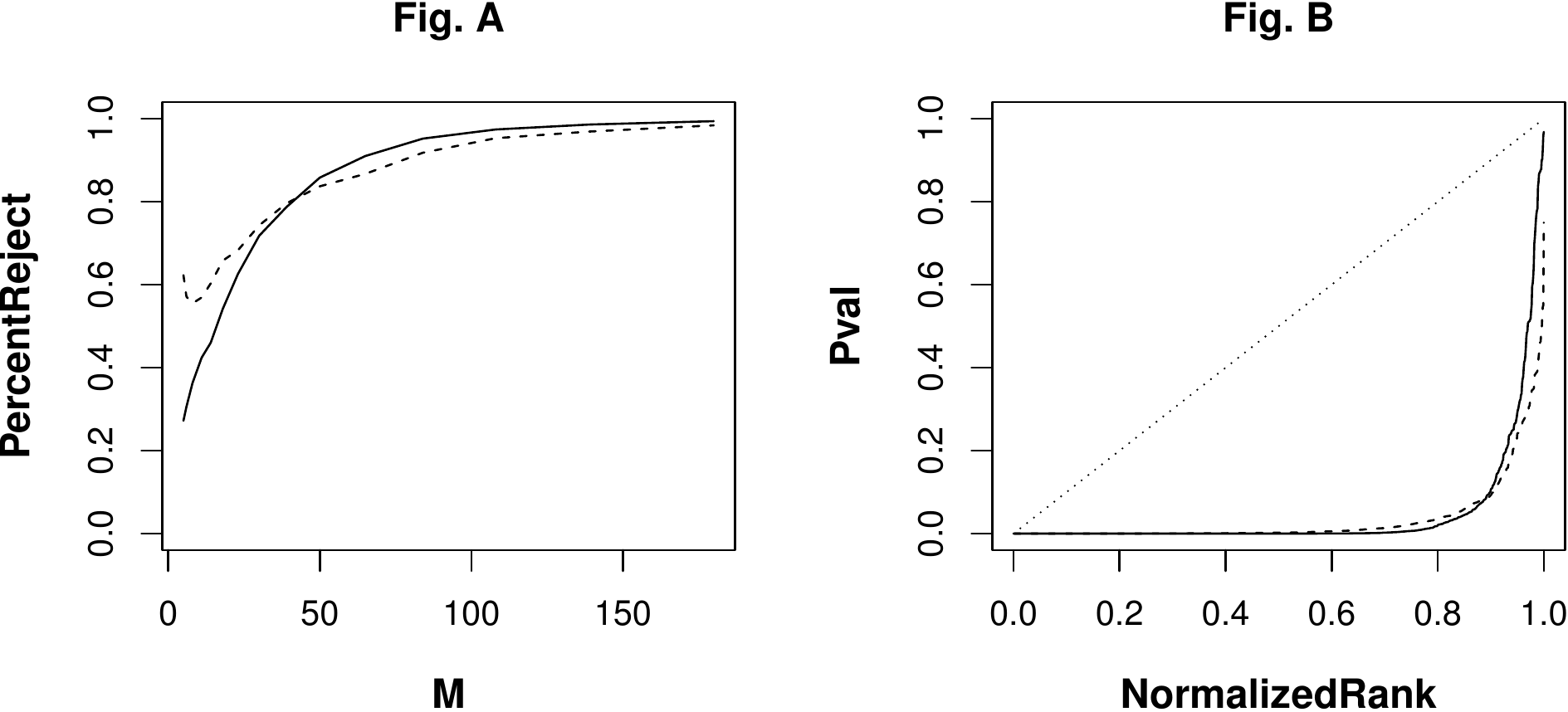}
\caption{\label{GraphHawkesDep} Under Framework {\bf $\textrm{F}_4$} (Dependence assumption, see Figure \ref{fig:Graphe dep loc}). {\bf Figure A.} Illustration of the true positive rate of the test,  for a theoretical test level of $5$\%. The curves represent the evolution, with respect to the number of trials, of the true positive rate (averaged on 1000 simulations). The plain line stands for our test and the dashed line for the original UE one. {\bf Figure B.} Graphs of the sorted 1000 p-values for 50 trials. The plain line stands for our test, the dashed line for the original UE one and the dotted line for the uniform distribution function.}
\end{figure}

\FloatBarrier

\subsection{Multiple pattern test}
\label{Mpt}
In the original MTGAUE method, a multiple testing procedure is applied with respect to $1900$ sliding time windows. In our framework, we cannot guarantee the relevance of the multiple test with this high order of multiplicity. This is due to the default of the Gaussian approximation and, more precisely, to the excess of very small p-values as noted in Section \ref{sec:simu}. But, we are able to propose a multiple testing procedure with respect to the different possible patterns. For example, with four neurons there are eleven different possible patterns, which gives a much lower order of multiplicity. So, the multiple test over all the  eleven sub-pattern of two, three or four neurons is presented here.\\

In multiple testing, the notion of false positive rate is not relevant. The closest notion might be the \emph{Family-Wise Error Rate} (FWER) which is the probability to wrongly reject at least one of the tests. This error rate can be controlled using Bonferroni's method but it is too restrictive, in particular when the number $K$ of tests involved is too large. One popular way to deal with multiple testing is the Benjamini-Hochberg procedure \citep{Benjamini1995} which ensures a control of the \emph{False Discovery Rate} (FDR). False discoveries cannot be avoided but it is not a problem if the ratio of $F_p$ (the number of false positives detections) divided by $R$ (the total number of rejects) is controlled. Therefore, the FWER and the FDR are mathematically defined by $\text{FWER}=\mathbb{P}\left(F_p>0 \right)$ and $\displaystyle \text{FDR}=\mathbb{E}\left[F_p/R \ \mathbf{1}_{R>0} \right].$\\

Note that in the full independent case, the FWER and the FDR are equal.
 The following procedure, due to Benjamini and Hochberg ensures a small FDR over $K$ tests: 

\begin{enumerate}
\item Fix a level $q$ ($q=5$\% for example);
\item Denote by $(P_1,\dots,P_{K})$ the p-values obtained for all considered tests;
\item Order them in increasing order and denote the increasing vector $(P_{(1)},\dots,P_{(K)})$;
\item Note $k_0$ the largest $k$ such that $P_{(k)}\leq k q/K$;
\item Then, reject all the tests corresponding to p-values smaller than $P_{(k_0)}$.
\end{enumerate}

The theoretical result of \citep{Benjamini1995} ensures that if the p-values are upper bounded by a uniform distribution and independently distributed under the null hypothesis, then the procedure guarantees a FDR less than $q$. The main drawback of this procedure in our case is that one needs to compute p-values that are very small when $K$ is large. For example, if $K\geq50$ and $q=5$\%, the upper bound given by $kq/K$ can be smaller than $0.001$ and as noted in Section \ref{sec:simu} the empirical frequency of very small p-values is greater than expected and therefore the uniform upper bound of the p-values is not guaranteed in our case. However, only $11$ tests are considered here and the procedure still returns reliable results.\\

We perform $1000$ simulations and count how many times each test rejects the independence. The results, obtained for $M=50$, are presented in Figure \ref{Histoboth}.

\begin{figure}[!h]
\centering{}\includegraphics[scale=0.6]{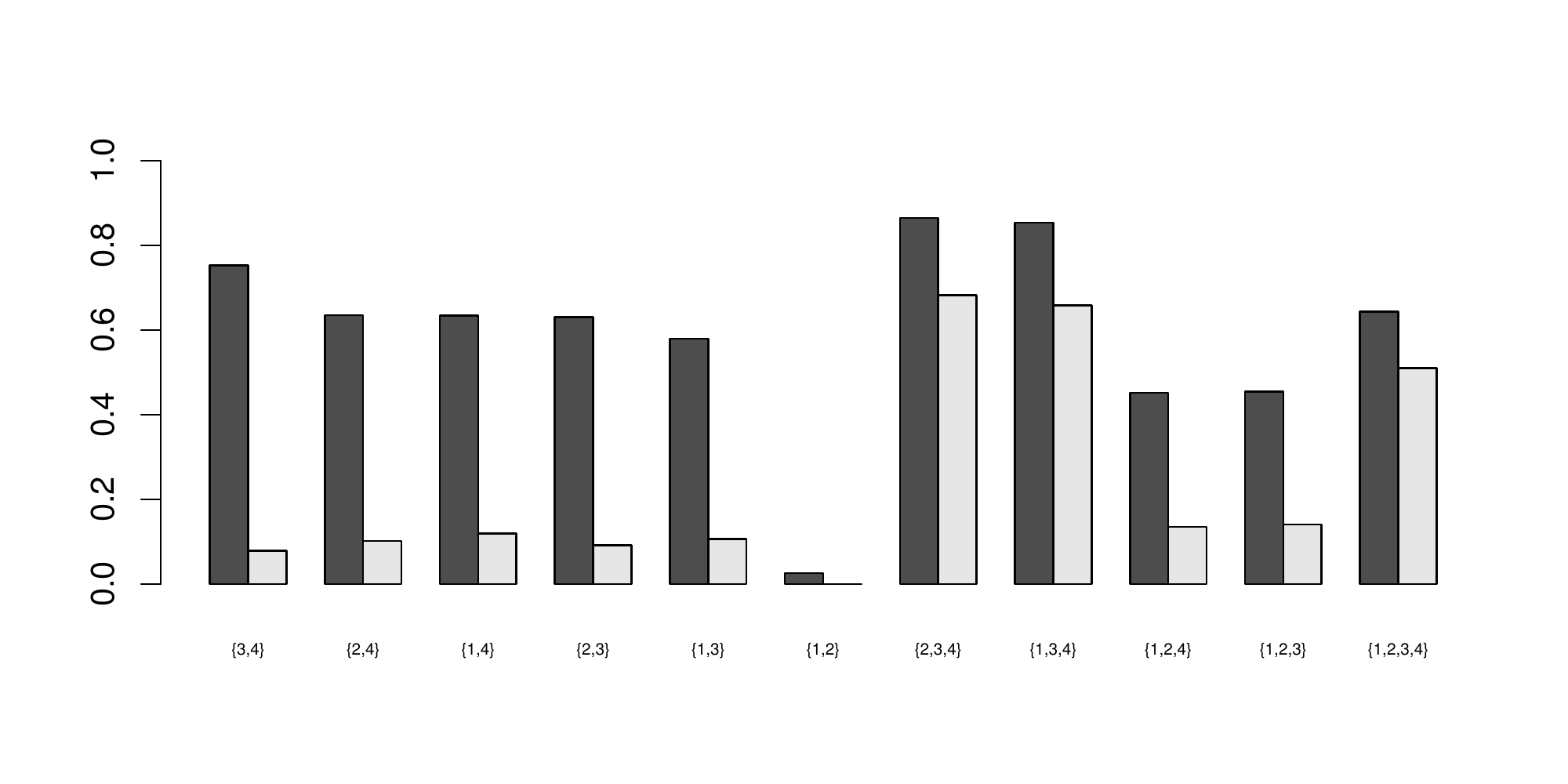}\caption{\label{Histoboth} Under Framework {\bf $\textrm{F}_4$} (Dependence assumption, see Figure \ref{fig:Graphe dep loc}). Frequency of dependence detections (1000 simulations) for each pattern. Grey for our test, white for the original UE method.}
\end{figure}

The results show that our test detects all patterns except $\{1,2\}$. This is consistent with the considered framework ({\bf $\textrm{F}_4$}) since we simulate connections between all pairs of neurons except $\{1,2\}$. The U.E. test essentially detects the patterns $\{2,3,4\}$, $\{1,3,4\}$, $\{1,2,3,4\}$ and to a lesser extent $\{1,2,4\}$ and $\{1,2,3\}$. Moreover, it misses all the pairs.

\section{Illustration on real data}
\label{sec:real}

After validating our test on simulations, we apply our method on real data and show results in agreement with classical knowledge on those data.

\subsection{Description of the data}

The data set considered here is the same  as in \citep{MTGAUE} and previous experimental studies \citep{Grammont2003,Riehle2000,Riehle2006}. The following description of the experiment is copied from Section 4.1 of \citep{MTGAUE}. These data were collected on a 5-year-old male Rhesus monkey who was trained to perform a delayed multi-directional pointing task. The animal sat in a primate chair in front of a vertical panel on which seven touch-sensitive light-emitting diodes were mounted, one in the center and six placed equidistantly (60 degrees apart) on a circle around it. The monkey had to initiate a trial by touching and then holding with the left hand the central target. After a fix delay of 500ms, the preparatory signal (PS) was presented by illuminating one of the six peripheral targets in green. After a delay of either 600ms (with probability 0.3) or 1200ms (with probability 0.7), it turned red, serving as the response signal and pointing target. Signals recorded from up to seven micro-electrodes (quartz insulated platinum-tungsten electrodes, impedance: 2-5M$\Omega$ at 1000Hz) were amplified and band-pass filtered from 300Hz to 10kHz. Using a window discriminator, spikes from only one single neuron per electrode were then isolated. Neuronal data along with behavioural events (occurrences of signals and performance of the animal) were stored on a PC for off-line analysis with a time resolution of 10kHz. The idea of the analysis is to detect some conspicuous patterns of coincident spike activity appearing during the response signal in the case of a long delay (1200ms). Therefore,  we only consider trials where the response signal is indeed occurring after a long delay.

\subsection{The test}

We have at hand the following data: spike trains associated to four neurons (35 trials by neurons). We consider two sub windows: one between 300ms and 500ms (i.e. before the preparatory signal), the other between 1100ms and 1300ms (i.e. around the expected signal). Our idea is that more synchronisation should be detected during the second window.  Moreover, we do not only want to test if the four considered neurons are independent (that is perform our test on the complete pattern $\{1,2,3,4\}$). Indeed one can be interested in knowing if neurons in some sub-patterns (for example $\{1,2\}$ or $\{1,3,4\}$ are independent. That is why we use the multiple pattern test procedure defined at the end of Section \ref{sec:Hawkes} to test all the eleven subsets (of at least two neurons) of the four considered neurons are tested. Thus we use the Benjamini-Hochberg procedure (presented in the previous section) for $K=22$ tests. Moreover, we took several values for the delay $\delta$ between 0.01s and 0.025s and the results remained stable.
\smallskip

The results are presented in Figure \ref{real}. Note that we saw in sections \ref{sec:Simulations} and \ref{sec:Hawkes}  that our test is too conservative even for small number of trials. This ensures that the theoretical level of our test can be trusted. We see that synchronizations between the subsets $\{3,4\}$ and $\{1,3,4\}$ appear in the second window. These results suggest that neurons 1, 3 and 4 belong to a neuronal assembly which is formed around the expected signal. This is in agreement with more quantitative results on those data \citep{Grammont2003,MTGAUE}. 
\vspace{1.5cm}

\begin{figure}[h]
\begin{minipage}[b][1\totalheight][t]{0.45\columnwidth}
\begin{center}
\includegraphics[scale=1]{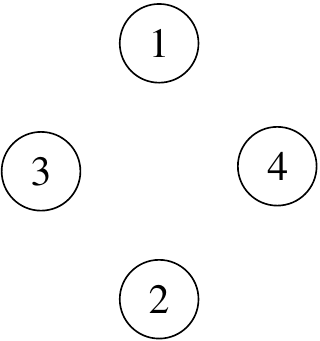}\\
300 - 500ms
\par\end{center}
\end{minipage}
\hfill{}
\begin{minipage}[b][1\totalheight][t]{0.45\columnwidth}
\begin{center}
\includegraphics[scale=1]{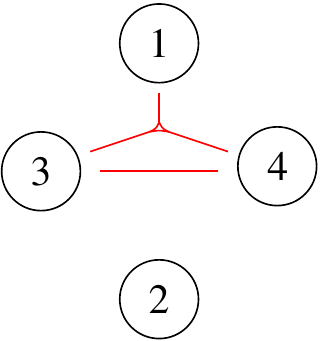}\\
1100 - 1300ms
\par\end{center}
\end{minipage}

\caption{\label{real} Evolution of the synchronization between neurons. The lines indicate the subset for which our test detects dependence. Here we detect an excess of coincidences between neurons $\{1,3,4\}$ and $\{3,4\}$}
\end{figure}

\section{Conclusion}

This paper generalizes the statistical study of the delayed coincidence count performed in \citep{MTGAUE} to more than two neurons. This delayed coincidence count  leads to an independence test for point processes which are commonly used to model spike trains.\\

Under the hypothesis that the point processes are homogeneous Poisson processes, the expectation and variance of the delayed coincidence count can be computed (Theorem \ref{thm:Moyenne et Esperance}), and then a test with prescribed  asymptotic level is built (Theorem \ref{thm:Delta Methode}). A simulation study allows us to confirm our theoretical results and to state the empirical validity of our test with a relaxed Poisson assumption. Indeed, we considered Hawkes processes which are a more realistic model of spike trains. The simulation study gives good results, even for small sample size. This allows us to use our test on real data, in order to highlight the emergence of a neuronal assembly involved at some particular time of the experiment.\\

We achieved the full generalization of the single test procedure introduced in \citep{MTGAUE}. However, we could not achieve the multiple time windows testing procedure mainly because of the default of Gaussian approximation concerning extreme values of the test statistics. More precisely, very small p-values are not distributed as expected. In particular, as noted at the end of Section \ref{sec:simu}, when the sample size $M$ is moderate ($M=50$), our test returns too many very small p-values. In \citep{MTGAUE}, the MTGAUE method is applied simultaneously on $1900$ sliding windows. In the present work,  in order to apply multiple testing both with respect to the sliding time windows and the subsets, the total number of tests is even larger. Indeed, for each sliding window, there are $2^n-n-1$ tests to perform, where $n$ is the number of recorded neurons. As said at the end of Section \ref{sec:Hawkes} this would lead to extremely small p-values, for which our test is less reliable.\\

Even if our test remains empirically reliable under a non Poissonian framework, it could be therefore of interest to explore surrogate data method such as trial-shuffling \citep{Pipa2003}. A very recent work based on permutation approach for delayed coincidence count with $n=2$ neurons \citep{Albert2014} is a first step in this direction but needs to be generalized to more than $2$ neurons.\\

\textbf{Acknowledgements}

We first of all want to thank Alexa Riehle, leader of the laboratory in which the data
used in this article were previously collected, and Franck Grammont who collected these data. 
Finally we thank Patricia Reynaud-Bouret for fruitful discussions.

\clearpage

\section{Proofs}
\label{app: expectation and variance}
As said in Section \ref{sub:Asymptotic:properties}, we prove more general results than Theorems \ref{thm:Moyenne et Esperance} and \ref{thm:Delta Methode}. Considering $N_{1},\dots,N_{n}$, some point processes on $\left[a,b\right]$ and $\mathcal{L}\subset \{1,\dots ,n\}$ a set of indices with cardinal $L\geq 2$, we prove the same kind of results with any coincidence function $c\left(x_{1},\ldots,x_{L}\right)$ with value either $0$ or $1$ satisfying Definition \ref{def: coincidence} below.

\begin{defin}
\label{def: coincidence}
$\left.\right.$

\begin{enumerate}
\item A coincidence function is a function $c:\left[a,b\right]^{L}\rightarrow\left\{ 0,1\right\} $ which is symmetric.

\item Let $\left(x_{1},\ldots,x_{L}\right)\in \prod_{l\in\mathcal{L}} N_{l}$
be a $L$-tuple with a spiking time of every neuron of the subset $\mathcal{L}$. Say that $\left(x_{1},\ldots,x_{L}\right)$ is a coincidence if and only if $c\left(x_{1},\ldots,x_{L}\right)=1$.

\item Given $c$ a coincidence function, we define $X_{\mathcal{L}}$ the number of coincidences on $[a,b]$ by:
\[
X_{\mathcal{L}} = \sum_{(x_1,\dots,x_L)\in \prod_{l\in \mathcal{L}} N_{l}}
c\left(x_{1},\ldots,x_{L}\right).
\]
\item Define
\[
\forall k\in\{0,\dots,L\},\ 
I(L,k)=\intop_{[a,b]^{L-k}}
\left(\intop_{[a,b]^{k}}c\left(x_{1},\ldots,x_{L}\right)\,
dx_{1}\ldots dx_{k}\right)^{2}dx_{k+1}\ldots dx_{L}
\]
where the convention $\intop\limits _{[a,b]^{0}}f\left(x\right) dx=f\left(x\right)$ is set.
\end{enumerate}

\end{defin}

\subsection{Proof of Theorem~\ref{thm:Moyenne et Esperance}\\}

\begin{thm}
\label{thm:Moyenne et Esperance general}Under assumptions and notations of Definition \ref{def: coincidence}, if $N_{1},\dots,N_{n}$ are some independent homogeneous Poisson processes on $[a,b]$ with intensities $\lambda_{1},\dots,\lambda_{n}$, the expected value and the variance of the number of coincidences $X_{\mathcal{L}}$ are given by:
\begin{equation}\label{eq:moyenne:nb:coin}
m_{0,\mathcal{L}}:=\mathbb{E}\left[X_{\mathcal{L}}\right]=\left(\prod_{l\in\mathcal{L}}
\lambda_{l}\right)I(L,0)
\end{equation}
and
\begin{equation}\label{eq:variance:nb:coin}
\Var(X_{\mathcal{L}})=m_{0,\mathcal{L}}+\sum_{k=1}^{L-1}\left(\sum_{\begin{subarray}{c}
\mathcal{J}\subset\mathcal{L}\\
\#\mathcal{J}=k
\end{subarray}}\prod_{j\in\mathcal{J}}\lambda_{j}^{2}
\prod_{l\notin\mathcal{J}}\lambda_{l}\right)I(L,k).
\end{equation}

\end{thm}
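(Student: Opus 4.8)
The proof computes $\mathbb{E}[X_{\mathcal{L}}]$ and $\mathbb{E}[X_{\mathcal{L}}^2]$ directly from the definition $X_{\mathcal{L}} = \sum_{(x_1,\dots,x_L)\in\prod_{l\in\mathcal{L}} N_l} c(x_1,\dots,x_L)$, using the factorial-moment / Campbell-type formulas for functionals of independent Poisson processes. The first step is the mean: since the processes are independent and homogeneous with intensities $\lambda_l$, the expected value of a sum of $c$ over the product point set $\prod_{l\in\mathcal{L}} N_l$ is $\left(\prod_{l\in\mathcal{L}}\lambda_l\right)\int_{[a,b]^L} c(x_1,\dots,x_L)\,dx_1\cdots dx_L$, which is exactly $\left(\prod_{l\in\mathcal{L}}\lambda_l\right) I(L,0)$. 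This is the baseline computation; I would justify it by the fact that the mean measure of $N_l$ is $\lambda_l\,dx$ and the expectation of a multilinear sum over independent processes factorizes.

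The second and main step is the second moment. Writing $X_{\mathcal{L}}^2$ as a double sum over pairs of $L$-tuples $(x_1,\dots,x_L)$ and $(y_1,\dots,y_L)$, I would partition the pairs according to which coordinates coincide: for each subset $\mathcal{J}\subset\mathcal{L}$, consider the pairs for which $x_l = y_l$ exactly when $l\in\mathcal{J}$ (i.e.\ the two tuples share the spike of neuron $l$ for $l\in\mathcal{J}$ and use distinct spikes for $l\notin\mathcal{J}$). For a fixed $\mathcal{J}$ with $\#\mathcal{J}=k$, the corresponding contribution to $\mathbb{E}[X_{\mathcal{L}}^2]$ involves: a single integral $dx_l$ (with weight $\lambda_l$) for each $l\in\mathcal{J}$ appearing once, and two independent integrals $dx_l$, $dy_l$ (with weight $\lambda_l^2$) for each $l\notin\mathcal{J}$; the contribution is thus $\prod_{j\in\mathcal{J}}\lambda_j \prod_{l\notin\mathcal{J}}\lambda_l^2$ times $\int c(x)\,c(y)$ integrated over the shared variables once and the unshared variables twice, which by Fubini (integrating out the $k$ shared coordinates first) equals exactly $I(L,k)$ as defined in Definition~\ref{def: coincidence}. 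Here one uses that for a Poisson process $\mathbb{E}\big[\sum_{x\neq x' \in N_l} f(x,x')\big] = \lambda_l^2 \int f$, while the "diagonal" terms $x=x'$ contribute the $\lambda_l^1$ factor; the symmetry of $c$ guarantees the combinatorial bookkeeping is clean. Summing over all $\mathcal{J}$, the $k=L$ term (all coordinates shared) gives $\left(\prod_{l\in\mathcal{L}}\lambda_l\right) I(L,L) = \mathbb{E}[X_{\mathcal{L}}]\cdot(\text{something})$; more precisely the $k=L$ term is $\left(\prod_{l\in\mathcal{L}}\lambda_l\right)\int c^2 = \left(\prod_{l\in\mathcal{L}}\lambda_l\right)\int c = \mathbb{E}[X_{\mathcal{L}}]$ since $c\in\{0,1\}$ so $c^2=c$ and $I(L,L)=\int_{[a,b]^L} c$.

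Finally, $\Var(X_{\mathcal{L}}) = \mathbb{E}[X_{\mathcal{L}}^2] - \mathbb{E}[X_{\mathcal{L}}]^2$. The $k=0$ term of $\mathbb{E}[X_{\mathcal{L}}^2]$ is $\left(\prod_{l\in\mathcal{L}}\lambda_l^2\right)\left(\int_{[a,b]^L} c\right)^2 = \mathbb{E}[X_{\mathcal{L}}]^2$ by the mean formula, so it cancels against $-\mathbb{E}[X_{\mathcal{L}}]^2$ exactly. What remains is the $k=L$ term, which equals $m_{0,\mathcal{L}}$, plus the terms $k=1,\dots,L-1$, which give $\sum_{k=1}^{L-1}\big(\sum_{\#\mathcal{J}=k}\prod_{j\in\mathcal{J}}\lambda_j^2\prod_{l\notin\mathcal{J}}\lambda_l\big)I(L,k)$ — note the roles of $\lambda_j^2$ (shared, appears once but with a second-moment weight... ) and $\lambda_l$ are as in the statement; I would double-check the exponent bookkeeping here, as it is the one place a factor can be misassigned. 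This yields \eqref{eq:variance:nb:coin}.

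The main obstacle is the careful combinatorial decomposition of the double sum over pairs of $L$-tuples into the strata indexed by $\mathcal{J}$, and correctly tracking which intensity gets raised to which power — the diagonal (shared-spike) coordinates contribute a first-moment factor $\lambda_j$ while the off-diagonal coordinates contribute a second-moment factor $\lambda_l^2$ coming from $\mathbb{E}[\sum_{x\neq x'} f]$. A minor technical point worth stating cleanly is the convention $\int_{[a,b]^0} f = f$ so that $I(L,L) = \int_{[a,b]^L} c$, and the use of $c^2 = c$. Everything else is an application of Fubini's theorem and the elementary moment formulas for Poisson processes.
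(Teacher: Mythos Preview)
Your approach is exactly the paper's: write $X_{\mathcal{L}}^2$ as a double sum, decompose according to which coordinates of the two $L$-tuples coincide, use the Poisson second factorial moment (diagonal contributes $\lambda_l\,dx$, off-diagonal contributes $\lambda_l^2\,dx\,dy$), and identify the two extreme strata with $m_{0,\mathcal{L}}$ and $\mathbb{E}[X_{\mathcal{L}}]^2$ respectively.

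The bookkeeping slip you flagged is real, though. If $\mathcal{J}$ is the set of \emph{shared} coordinates with $\#\mathcal{J}=k$, then the integral you obtain is
\[
\int_{[a,b]^{k}}\Bigl(\int_{[a,b]^{L-k}} c\Bigr)^{2},
\]
which is $I(L,L-k)$, not $I(L,k)$: in Definition~\ref{def: coincidence} the shared variables are the \emph{outer} ones. Your coefficient $\prod_{j\in\mathcal{J}}\lambda_j\prod_{l\notin\mathcal{J}}\lambda_l^{2}$ is correct for this $\mathcal{J}$; after the change $k\mapsto L-k$ and replacing $\mathcal{J}$ by its complement (the unshared set), you land on the stated formula, where now $\mathcal{J}$ carries the $\lambda_j^{2}$. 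This also repairs your endpoint identifications: with all coordinates shared the integral is $\int c^{2}=\int c=I(L,0)$, giving the $m_{0,\mathcal{L}}$ term; and $I(L,L)=\bigl(\int c\bigr)^{2}=I(L,0)^{2}$ by the convention (not $\int c$ as you wrote), which is precisely why the ``all distinct'' stratum equals $\mathbb{E}[X_{\mathcal{L}}]^{2}$ and cancels. Once this reindexing is made explicit, your argument is complete and coincides with the paper's.
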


\begin{proof}
By definition,
\[
\mathbb{E}\left[X_{\mathcal{L}}\right] = \mathbb{E}\left[\sum_{(x_1,\dots,x_L)\in \prod_{l\in \mathcal{L}} N_{l}} c\left(x_{1},\ldots,x_{L}\right)\right].
\]
Using the fact that $N_{1},\cdots,N_{n}$ are independent homogeneous
Poisson processes with respective intensities $\lambda_{1},\cdots,\lambda_{n}$
one can prove (see \citep{Daley_PP_1}) that 
\[
\mathbb{E}\left[X_{\mathcal{L}}\right]=\left(\prod_{l\in \mathcal{L}}\lambda_{l}\right)\int\limits _{[a,b]^{L}}c\left(x_{1},\ldots,x_{L}\right)\ dx_{1}\ldots dx_{L} = \left(\prod_{l\in\mathcal{L}} \lambda_{l}\right)I(L,0).
\]
For sake of simplicity, the variance is computed in the simpler case where $\mathcal{L}=\{1,...,L\}$, the generalization being pretty clear. In order to simplify, we use the integral form of the coincidence count, i.e.
\begin{equation*}
X_{\mathcal{L}} = \int\limits _{[a,b]^{L}}c\left(x_{1},\ldots,x_{L}\right)\ dN_{1}(x_{1})\dots dN_{L}(x_{L})
\end{equation*}
where $dN_{1},\dots,dN_{L}$ are the point measures associated to $N_{1},\dots,N_{L}$. Thanks to Fubini Theorem we have
\begin{equation}\label{eq:esperance:X:2}
\mathbb{E}\left[X_{\mathcal{L}}^{2}\right]=\mathbb{E}\left[\int\limits _{[a,b]^{2L}}c\left(x_{1},\ldots,x_{L}\right)c\left(y_{1},\ldots,y_{L}\right)\ \prod_{l=1}^{L}dN_{l}\left(x_{l}\right)dN_{l}\left(y_{l}\right)\right].
\end{equation}
Then, let us define 
\begin{equation}\label{eq:def:a:b:1:a:b:2}
\left[a,b\right]^{\left(1\right)}=\left\{ \left(x,y\right)\in[a,b]^{2}\ |\ x=y\right\}
\text{ and } \left[a,b\right]^{\left(2\right)}=\left[a,b\right]^{2}\setminus\left[a,b\right]^{\left(1\right)}.
\end{equation}
Now, let us see that $[a,b]^{2L}=\left(\left[a,b\right]^{2}\right)^{L} = \sqcup_{\varepsilon\in \{1,2\}^{L}} \left( \prod_{l=1}^{L} [a,b]^{(\varepsilon_{l})} \right)$ where $\varepsilon_{l}$ denotes the $l$-th coordinate of $\varepsilon$. Using this decomposition and Equation~\eqref{eq:esperance:X:2}, it is clear that
\begin{equation}\label{eq:esperance:X:2:decomp}
\mathbb{E}\left[X_{\mathcal{L}}^{2}\right]=\sum_{\varepsilon\in \{1,2\}^{L}} A_{\varepsilon},
\end{equation}
where for all $\varepsilon$ in $\{1,2\}^{L}$,
\begin{equation*}
A_{\varepsilon}=\mathbb{E}\left[\ \int\limits _{\prod\limits_{l=1}^{L} [a,b]^{(\varepsilon_{l})}}c\left(x_{1},\ldots,x_{L}\right)c\left(y_{1},\ldots,y_{L}\right)\ \prod_{l=1}^{L}dN_{l}\left(x_{l}\right)dN_{l}\left(y_{l}\right)\right].
\end{equation*}
For every $p=1,\dots,L$, let $\varepsilon^{(p)}=(1,\dots,1,2,\dots,2)$ where the number of $1$'s in $\varepsilon^{(p)}$ is exactly $p$.  Properties of the moment measure of Poisson processes (see \citep{Daley_PP_1} or \citep{Poisson_Process} in a more simplified framework) lead to
\begin{multline}
\label{eq:Arg}
A_{\varepsilon^{(p)}}=\mathbb{E}\left[ \int\limits _{\left(\left[a,b\right]^{\left(1\right)}\right)^{p}}\int\limits _{\left(\left[a,b\right]^{\left(2\right)}\right)^{L-p}}c\left(x_{1},\ldots,x_{L}\right)c\left(y_{1},\ldots,y_{L}\right)\ \prod_{l=1}^{L}dN_{l}\left(x_{l}\right)dN_{l}
\left(y_{l}\right)\right]\\
\quad \quad = \prod_{l=1}^{p}\lambda_{l}\prod_{j=p+1}^{L}\lambda_{j}^{2}\intop_{\left[a,b\right]^{p}}\left(\intop_{\left[a,b\right]^{2\left(L-p\right)}}c\left(t_{1},\ldots,t_{p},x_{p+1},\ldots,x_{L}\right) \right.\\
\left. c\left(t_{1},\ldots,t_{p},y_{p+1},\ldots,y_{L}\right)\,\prod_{k=p+1}^{L}dx_{k}dy_{k}\right)dt_{1}\ldots dt_{p}.
\end{multline}
For fixed $(t_{1},\ldots,t_{p})$ one can apply Fubini Theorem to the
inner integral which leads to:
\begin{eqnarray*}
A_{\varepsilon^{(p)}} &= &\prod_{l=1}^{p}\lambda_{l}\prod_{j=p+1}^{L}\lambda_{j}^{2}\intop_{\left[a,b\right]^{p}}\left(\intop_{\left[a,b\right]^{2\left(L-p\right)}}c\left(t_{1},\ldots,t_{p},t_{p+1},\ldots,t_{J}\right)dt_{p+1}\dots dt_{L}\right)^{2}dt_{1}\ldots dt_{p}.\\
& = &\prod\limits _{l=1}^{p}\lambda_{l}\prod\limits _{j=p+1}^{L}\lambda_{j}^{2}\ I\left(L,L-p\right).
\end{eqnarray*}
by definition of $I(L,L-p)$. 

For more general vectors $\varepsilon$ in $\{1,2\}^{L}$, let us note $p$ the occurrence count of $1$ in the vector $\varepsilon$ and $I_{\varepsilon}$ (respectively $J_{\varepsilon}$) the set of indices of the coordinates of $\varepsilon$ equal to $1$ (respectively $2$). Then, using the symmetry of the coincidence function $c$, one can easily deduce from the computation of $A_{\varepsilon^{(p)}}$ that
\begin{equation}\label{eq:A:epsilon}
A_{\varepsilon} = \prod\limits _{i\in I_{\varepsilon}}\lambda_{i}\prod\limits _{j\in J_{\varepsilon}}\lambda_{j}^{2}\ I\left(L,L-p\right).
\end{equation}
From~\eqref{eq:esperance:X:2:decomp} and~\eqref{eq:A:epsilon}, one deduces
\begin{equation*}
\mathbb{E}\left[X_{\mathcal{L}}^{2}\right]= \sum_{p=0}^{L}\left(\sum_{
\begin{subarray}{c}
\mathcal{J}\subset\mathcal{L}\\
\#\mathcal{J}=p
\end{subarray}}
\prod_{j\in\mathcal{J}}\lambda_{j}\prod_{l\notin\mathcal{J}}\lambda_{l}^{2}\right)I(L,L-p).
\end{equation*}
Note that $I(L,L)=I(L,0)^{2}$ by definition, so the case $p=0$ in the sum corresponds to 
$$\prod\limits _{l\in \mathcal{L}}\lambda_{l}^{2}\ I\left(L,L\right)= \prod\limits _{l\in \mathcal{L}}\lambda_{l}^{2}\ I\left(L,0\right)^{2} = \mathbb{E}\left[ X_{\mathcal{L}} \right]^{2}.$$
Moreover, the case $p=L$ in the sum corresponds to 
$\prod\limits _{l\in \mathcal{L}}\lambda_{l}\ I\left(L,0\right)= \mathbb{E}\left[ X_{\mathcal{L}} \right]$.
So, we have
\begin{equation*}
\mathbb{E}\left[X_{\mathcal{L}}^{2}\right]= \mathbb{E}\left[ X_{\mathcal{L}} \right]^{2} + \mathbb{E}\left[ X_{\mathcal{L}} \right] + \sum_{p=1}^{L-1}\left(\sum_{
\begin{subarray}{c}
\mathcal{J}\subset\mathcal{L}\\
\#\mathcal{J}=p
\end{subarray}}
\prod_{j\in\mathcal{J}}\lambda_{j}\prod_{l\notin\mathcal{J}}\lambda_{l}^{2}\right)I(L,L-p),
\end{equation*}
and~\eqref{eq:variance:nb:coin} clearly follows by defining the variable $k=L-p$.

\end{proof}

Theorem \ref{thm:Moyenne et Esperance} is a direct consequence of Theorem \ref{thm:Moyenne et Esperance general} since the function $c_\delta : \left[a,b\right]^{L}\rightarrow\left\{ 0,1\right\}$ defined by
\begin{equation}\label{eq:def:fun:coin}
c_\delta(x_1,\hdots,x_n)=\mathbf{1}_{\left| \max\limits_{i\in\{1,\dots,L\}}x_{i} -\min\limits_{i\in\{1,\dots,L\}}x_{i} \right|\leq\delta},\; 0< \delta <\frac{b-a}{2}
\end{equation}

satisfies Definition \ref{def: coincidence}.

\subsection{Proof of Theorem~\ref{thm:Delta Methode}}
\label{app: delta method}

\begin{thm}
\label{thm:Delta Methode general}Under Notations and Assumptions of Theorem \ref{thm:Moyenne et Esperance general},  the two following affirmations are valid:

\begin{itemize}
\item The following convergence of distribution holds:
\[
\sqrt{M}\left(\bar{m}_{\mathcal{L}}-\hat{m}_{0,\mathcal{L}}\right)\underset{M\rightarrow\infty}{\overset{\mathcal{D} }{\longrightarrow}}\mathcal{N}\left(0,\sigma^{2}\right),
\]
where
\[
\sigma^{2}=\Var(X_{\mathcal{L}})-(b-a)^{-1}\mathbb{E}\left[X_{\mathcal{L}}\right]^{2}\left(\sum_{l\in \mathcal{L}} \lambda_{l}^{-1}\right).
\]

\item Moreover, $\sigma^{2}$ can be estimated by 
\[
\hat{\sigma}^{2}=\hat{v}\left(X_{\mathcal{L}}\right)-(b-a)^{-1}I(L,L)\prod_{l\in \mathcal{L}}\hat{\lambda}_{l}^{2}\left(\sum_{k\in \mathcal{L}}\hat{\lambda}_{k}^{-1}\right),
\]
where
\[
\hat{v}(X_{\mathcal{L}})=\hat{m}_{0,\mathcal{L}} +\sum_{k=1}^{L-1}
\left(
\sum_{\begin{subarray}{c}
\mathcal{J}\subset\mathcal{L}\\
\#\mathcal{J}=k
\end{subarray}}
\prod_{j\in\mathcal{J}}\hat{\lambda}_{j}^{2} \prod_{l\notin\mathcal{J}}\hat{\lambda}_{l}
\right)
I(L,k),
\]
and
\[
\sqrt{M}\frac{\left(\bar{m}_{\mathcal{L}}-\hat{m}_{0,\mathcal{L}}\right)}{\sqrt{\hat{\sigma}^{2}}}\overset{\mathcal{D}}{\rightarrow}\mathcal{N}\left(0,1\right).
\]
\end{itemize}

\end{thm}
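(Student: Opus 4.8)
\textbf{Proof proposal for Theorem~\ref{thm:Delta Methode general}.}

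The plan is to apply the multivariate delta method to the smooth function that sends the vector of estimated intensities to the estimated mean coincidence count $\hat m_{0,\mathcal{L}}$, while keeping track of $\bar m_{\mathcal{L}}$ jointly. First I would set up the underlying i.i.d.\ structure: for each trial $k$, record the pair consisting of the coincidence count $X_{\mathcal{L}}^{(k)}$ and the spike counts $(N_l^{(k)}([a,b]))_{l\in\mathcal{L}}$. These $M$ vectors are i.i.d.\ with finite second moments (Poisson processes on a bounded window have all moments finite, and $X_{\mathcal{L}}$ is bounded by a polynomial in the spike counts). By the multivariate CLT, $\sqrt{M}$ times the centred empirical mean of this vector converges to a centred Gaussian with covariance matrix $\Sigma$, whose entries I would compute explicitly: the $X_{\mathcal{L}}$-variance is $\Var(X_{\mathcal{L}})$ from Theorem~\ref{thm:Moyenne et Esperance general}; the variance of $N_l^{(k)}([a,b])$ is $\lambda_l(b-a)$; the covariance between $N_l^{(k)}([a,b])$ and $N_{l'}^{(k)}([a,b])$ is $0$ for $l\neq l'$ by independence; and the covariance between $X_{\mathcal{L}}^{(k)}$ and $N_l^{(k)}([a,b])$ is obtained by a moment-measure computation analogous to the one in the proof of Theorem~\ref{thm:Moyenne et Esperance general}, isolating the term where one spike of neuron $l$ is shared.

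Next I would introduce the map $\varphi(u_0, (u_l)_{l\in\mathcal{L}}) = u_0 - I(L,0)\prod_{l\in\mathcal{L}} (u_l/(b-a))$, so that $\varphi$ applied to the empirical averages gives exactly $\bar m_{\mathcal{L}} - \hat m_{0,\mathcal{L}}$, and $\varphi$ applied to the population means $(\mathbb{E}[X_{\mathcal{L}}], (\lambda_l(b-a))_{l})$ gives $\mathbb{E}[X_{\mathcal{L}}] - m_{0,\mathcal{L}}$, which is $0$ under $\mathcal{H}_0$. The delta method (e.g.\ \citep{Casella2002}) then yields $\sqrt{M}(\bar m_{\mathcal{L}} - \hat m_{0,\mathcal{L}}) \overset{\mathcal{D}}{\longrightarrow} \mathcal{N}(0, \nabla\varphi^{\top}\Sigma\,\nabla\varphi)$. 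Computing the gradient: $\partial_{u_0}\varphi = 1$, and $\partial_{u_l}\varphi$ evaluated at the population point equals $-m_{0,\mathcal{L}}/(\lambda_l(b-a))$. Plugging into $\nabla\varphi^{\top}\Sigma\,\nabla\varphi$ and using that the cross-covariances between distinct spike counts vanish, the expression collapses to $\Var(X_{\mathcal{L}})$ plus a term quadratic in the $\partial_{u_l}\varphi$ coming from the spike-count variances, plus cross terms between the $X_{\mathcal{L}}$-coordinate and the spike-count coordinates. The arithmetic should be arranged so that the cross terms and part of the quadratic term combine, leaving precisely $\sigma^2 = \Var(X_{\mathcal{L}}) - (b-a)^{-1}\mathbb{E}[X_{\mathcal{L}}]^2 \sum_{l\in\mathcal{L}}\lambda_l^{-1}$; this is the one genuinely delicate bookkeeping step, and I expect it to be the main obstacle, since it requires the covariance $\mathrm{Cov}(X_{\mathcal{L}}^{(k)}, N_l^{(k)}([a,b]))$ to have exactly the value that makes the cancellation work.

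For the second assertion, I would invoke the consistency of $\hat\lambda_l$ (law of large numbers, since $\hat\lambda_l$ is an empirical average of the i.i.d.\ spike counts divided by $(b-a)$) to conclude that $\hat\sigma^2$, being a fixed continuous function of the $\hat\lambda_l$ (note $\hat v(X_{\mathcal{L}})$ and the subtracted term are polynomials in $\hat\lambda_l$ with the $I(L,k)$ as coefficients), converges in probability to the same function of the $\lambda_l$, which is exactly $\sigma^2$ by the formulas in Theorem~\ref{thm:Moyenne et Esperance general}. Assuming $\sigma^2 > 0$, Slutsky's lemma then gives $\sqrt{M}(\bar m_{\mathcal{L}} - \hat m_{0,\mathcal{L}})/\sqrt{\hat\sigma^2} \overset{\mathcal{D}}{\longrightarrow} \mathcal{N}(0,1)$. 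I would close by remarking that positivity of $\sigma^2$ holds under the stated assumptions because $\delta < (b-a)/2$ keeps the integrals $I(L,k)$ in general position, so the variance term strictly dominates the subtracted plug-in correction; a short lemma or direct estimate handles this.
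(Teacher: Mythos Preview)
Your proposal is correct and follows essentially the same route as the paper: multivariate CLT on the joint vector $(X_{\mathcal{L}}^{(k)},(N_l^{(k)}([a,b]))_{l\in\mathcal{L}})$, explicit computation of the covariance matrix (the paper obtains $\mathrm{Cov}(X_{\mathcal{L}},N_l([a,b]))=\mathbb{E}[X_{\mathcal{L}}]$ by the same diagonal/off-diagonal splitting you describe), delta method with your map $\varphi$ (the paper's $g$), and Slutsky for the studentized version. Your extra remark on the positivity of $\sigma^2$ is a point the paper leaves implicit.
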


\begin{proof}
For sake of simplicity, the result is proven in the simpler case where $\mathcal{L}=\{1,...,L\}$, the generalization being pretty clear.
An application of the Central Limit Theorem leads to:
\[
\frac{1}{\sqrt{M}}\sum_{k=1}^M \left[\left(\begin{array}{c}
X_{\mathcal{L}}^{\left(k\right)}\\
N_{1}^{\left(k\right)}\left(\left[a,b\right]\right)\\
\vdots\\
N_{L}^{\left(k\right)}\left(\left[a,b\right]\right)
\end{array}\right)-\left(\begin{array}{c}
\mathbb{E}\left[X_{\mathcal{L}}\right]\\
\lambda_{1}(b-a)\\
\vdots\\
\lambda_{L}(b-a)
\end{array}\right)\right]\overset{\mathcal{D}}{\rightarrow}\mathcal{N}_{L+1}\left({\bf 0},\Gamma\right),
\]
where $\mathcal{N}_{L+1}\left({\bf 0},\Gamma\right)$ is the multivariate Gaussian distribution with $L+1$-dimensional mean vector ${\bf 0}$ and covariance matrix $\Gamma$ defined by: 
\[
\Gamma=\left(\begin{array}{cccc}
\Var(X_{\mathcal{L}}) & \mathbb{E}\left[X_{\mathcal{L}}\right] & \cdots & \mathbb{E}\left[X_{\mathcal{L}}\right]\\
\mathbb{E}\left[X_{\mathcal{L}}\right] & \lambda_{1}(b-a) & 0 & 0\\
\vdots & 0 & \ddots & 0\\
\mathbb{E}\left[X_{\mathcal{L}}\right] & 0 & 0 & \lambda_{L}(b-a)
\end{array}\right)
\]
The matrix is obtained using the fact that the processes $N_{l}$, $l\in \mathcal{L}$
are independent and from the following computation for all $j_{0}$ in $\mathcal{L}$,
\begin{eqnarray*}
\mathbb{E}\left[X_{\mathcal{L}}N_{j_{0}}\left(\left[a,b\right]\right)\right] & = & \mathbb{E}\left[\int\limits _{[a,b]^{L+1}}c\left(x_{1},\ldots,x_{L}\right)\ dN_{1}\left(x_{1}\right)\ldots dN_{L}\left(x_{L}\right)dN_{j_{0}}\left(y\right)\right]\\
 & = & \mathbb{E}\left[\int\limits _{[a,b]^{L-1}}\left(\intop\limits _{\left[a,b\right]^{\left(2\right)}}c\left(x_{1},...,x_{L}\right)\ dN_{j_{0}}\left(x_{j_{0}}\right)dN_{j_{0}}\left(y\right)\right) \prod_{l\in \mathcal{L}, l\neq j_{0}}dN_{j}\left(x_{j}\right)\right]\\
 &  & +\mathbb{E}\left[\int\limits _{[a,b]^{L-1}}\left(\intop\limits _{\left[a,b\right]^{\left(1\right)}}c\left(x_{1},...,x_{L}\right)\ dN_{j_{0}}\left(x_{j_{0}}\right)dN_{j_{0}}\left(y\right)\right) \prod_{l\in \mathcal{L}, l\neq j_{0}}dN_{j}\left(x_{j}\right)\right]\\
 & = & \lambda_{j_{0}} \left(\prod_{l\in \mathcal{L}} \lambda_{l} \right) \int\limits _{[a,b]} \left( \int\limits _{[a,b]^{L}}c\left(x_{1},\ldots,x_{L}\right)\ dx_{1}\ldots dx_{L}\right)dy+\left(\prod_{l\in \mathcal{L}} \lambda_{l} \right)I(L,0)\\
&&  \textrm{(using the same arguments as for \eqref{eq:Arg})}\\
 & = & \lambda_{j_{0}}(b-a)\left(\prod_{l\in \mathcal{L}} \lambda_{l} \right)I(L,0)+\left(\prod_{l\in \mathcal{L}} \lambda_{l} \right)I(L,0)\\
 & = & \mathbb{E}\left[N_{j_{0}}\left(\left[a,b\right]\right)\right]\mathbb{E}\left[X_{\mathcal{L}}\right]+\mathbb{E}\left[X_{\mathcal{L}}\right],
\end{eqnarray*}
where $\left[a,b\right]^{\left(2\right)}$ and $\left[a,b\right]^{\left(1\right)}$ are defined by~\eqref{eq:def:a:b:1:a:b:2} in the previous proof.
Define 
$$g:(x,u_{1},\ldots,u_{L})\mapsto x- (b-a)^{-L}I(L,0)\prod_{l=1}^{L}u_{l},$$
and remark that:
$$
\left\{
\begin{aligned}
&g\left(\frac{1}{M}\sum_{k=1}^M X_{\mathcal{L}}^{\left(k\right)},\frac{1}{M}\sum_{k=1}^M N_{1}^{\left(k\right)}\left(\left[a,b\right]\right),\ldots,\frac{1}{M}\sum_{k=1}^M N_{L}^{\left(k\right)}\left(\left[a,b\right]\right)\right)=\bar{m}_{\mathcal{L}}-\hat{m}_{0,\mathcal{L}}, \\
&g\left(\mathbb{E}\left[X_{\mathcal{L}}\right],\lambda_{1}(b-a),\ldots,\lambda_{L}(b-a)\right)=0 \quad\textrm{ (thanks to Theorem \ref{thm:Moyenne et Esperance general})}.
\end{aligned}
\right.
$$
So we have
\begin{eqnarray*}
\sqrt{M}\left(\bar{m}_{\mathcal{L}}-\hat{m}_{0,\mathcal{L}}\right) & = & \sqrt{M}\left[g\left(\frac{1}{M}\sum_{k=1}^M X_{\mathcal{L}}^{\left(k\right)},\frac{1}{M}\sum_{k=1}^M N_{1}^{\left(k\right)}\left(\left[a,b\right]\right),\ldots,\frac{1}{M}\sum_{k=1}^M N_{L}^{\left(k\right)}\left(\left[a,b\right]\right)\right)\right.\\
 &  & \left.-g\left(\mathbb{E}\left[X_{\mathcal{L}}\right],\lambda_{1}(b-a),\ldots,\lambda_{L}(b-a)\right)\vphantom{\sum_{k=1}^M}\right].
\end{eqnarray*}
And the delta method \citep{Casella2002} gives the following convergence of distribution,
\[
\sqrt{M}\left(\bar{m}_{\mathcal{L}}-\hat{m}_{0,\mathcal{L}}\right)\underset{}{\overset{\mathcal{D}}{\longrightarrow}}\mathcal{N}\left(0,\vphantom{D}^{t}\negmedspace D\Gamma D\right),
\]
where $D$ is the gradient of the function $g$ at the point $\left(\mathbb{E}\left[X_{\mathcal{L}}\right],\lambda_{1}(b-a),\ldots,\lambda_{L}(b-a)\right)$
i.e.
\[
D=\left(\begin{array}{c}
1\\
-\lambda_{1}^{-1}\mathbb{E}\left[X_{\mathcal{L}}\right](b-a)^{-1}\\
\vdots\\
-\lambda_{L}^{-1}\mathbb{E}\left[X_{\mathcal{L}}\right](b-a)^{-1}
\end{array}\right).
\]
So,
\begin{eqnarray*}
\vphantom{D}^{t}\negmedspace D\Gamma D & = &\vphantom{D}^{t}\negmedspace D
\left(\begin{array}{c}
\Var(X_{\mathcal{L}})-(b-a)^{-1}
\mathbb{E}\left[X_{\mathcal{L}}\right]^{2}
\left(\sum_{l\in\mathcal{L}}\lambda_{l}^{-1}\right)\\
\mathbb{E}\left[X_{\mathcal{L}}\right]-\mathbb{E}\left[X_{\mathcal{L}}\right]\\
\vdots\\
\mathbb{E}\left[X_{\mathcal{L}}\right]-\mathbb{E}\left[X_{\mathcal{L}}\right]
\end{array}\right)\\
& = & \Var(X_{\mathcal{L}})-(b-a)^{-1} \mathbb{E}\left[X_{\mathcal{L}}\right]^{2} \left(\sum_{l\in\mathcal{L}}\lambda_{l}^{-1}\right),
\end{eqnarray*}
which proves the first part of the Theorem \ref{thm:Delta Methode general}.

To get the second part, it suffices to apply Slutsky lemma \citep{Casella2002} since the $\hat{\lambda}_l$'s are consistent.
\end{proof}

Once again, Theorem \ref{thm:Delta Methode} is a direct consequence of Theorem \ref{thm:Delta Methode general} since the function $c_\delta : \left[a,b\right]^{L}\rightarrow\left\{ 0,1\right\}$ defined by~\eqref{eq:def:fun:coin} satisfies Definition \ref{def: coincidence}.

\subsection{Proof of Proposition \ref{prop:Formule des I(k)}}
Here we compute
\[
I\left(L,k\right)=\intop_{[a,b]^{L-k}}\left(\intop_{[a,b]^{k}}\mathbf{1}_{|\max\left(\vee x_{i},\vee y_{i}\right)-\min\left(\wedge x_{i},\wedge y_{i}\right)|\leq\delta}\, dx_{1}\ldots dx_{k}\right)^{2}dy_{1}\ldots dy_{L-k}
\]
where $\wedge x_{i}=\min\left\{ x_{i},i\in\left\{ 1,\dots,k\right\} \right\}$, $\vee x_{i}=\max\left\{ x_{i},i\in\left\{ 1,\dots,k\right\} \right\}$ and respectively for $\wedge y_{i}$ and $\vee y_{i}$. Let us fix some $(y_{1},\dots ,y_{L-k})$ in $[a,b]^{L-k}$ and compute the inner integral
\[
\Sigma\left( y_{1},\dots ,y_{L-k} \right)=\intop_{\left[a,b\right]^{k}}\mathbf{1}_{|\max\left(\vee x_{i},\vee y_{i}\right)-\min\left(\wedge x_{i},\wedge y_{i}\right)|\leq\delta}\ dx_{1}\ldots dx_{k}.
\]
In order to do that let us decompose the integral with respect to the following conditions on $(x_{1},\dots ,x_{k})$:
\begin{enumerate}
\item if $\wedge x_{i}>\wedge y_{i}$ and $\vee x_{i}>\vee y_{i}$, denote
the integral $A$;
\item if $\wedge x_{i}<\wedge y_{i}$ and $\vee x_{i}<\vee y_{i}$, denote
the integral $B$;
\item if $\wedge x_{i}>\wedge y_{i}$ and $\vee x_{i}<\vee y_{i}$, denote
the integral $C$;
\item if $\wedge x_{i}<\wedge y_{i}$ and $\vee x_{i}>\vee y_{i}$, denote
the integral $D$.
\end{enumerate}
Since we have partitioned $\left[a,b\right]^{k}$ up to a null measure
set, we have $\Sigma\left( y_{1},\dots ,y_{L-k} \right)=A+B+C+D$. Let us show the following equations for all $k=2,\dots ,L-1$,
\begin{align}
\label{eq:A} A=&\mathbf{1}_{|\vee y_{i}-\wedge y_{i}|\leq\delta}\left[\left(\min(\delta,b-\wedge y_{i})\right)^{k}-\left(\vee y_{i}-\wedge y_{i}\right)^{k}\right],\\
\label{eq:B} B=&\mathbf{1}_{|\vee y_{i}-\wedge y_{i}|\leq\delta}\left[\left(\min(\delta,\vee y_{i}-a)\right)^{k}-\left(\vee y_{i}-\wedge y_{i}\right)^{k}\right],\\
\label{eq:C} C=&\mathbf{1}_{|\vee y_{i}-\wedge y_{i}|\leq\delta}\left(\vee y_{i}-\wedge y_{i}\right)^{k},\\
\label{eq:D} D  = & \mathbf{1}_{|\vee y_{i}-\wedge y_{i}|\leq\delta}\left[\left(\vee y_{i}-\wedge y_{i}\right)^{k}-\left(\min(\delta,\vee y_{i}-a)\right)^{k}\right.\\
 & \left.+k\left(\min(\wedge y_{i},b-\delta)-\max\left(\vee y_{i}-\delta,a\right)\right)\delta^{k-1}+\left(\max(\delta,b-\wedge y_{i})\right)^{k}-\left(b-\wedge y_{i}\right)^{k}\right] \nonumber
\end{align}
and
\begin{equation}\label{eq:Sigma}
\Sigma\left( y_{1},\dots ,y_{L-k} \right)=\mathbf{1}_{|\vee y_{i}-\wedge y_{i}|\leq\delta}\left[\left(k+1\right)\delta^{k}+k\left(\min(\wedge y_{i},b-\delta)-\max\left(\vee y_{i},a+\delta\right)\right)\delta^{k-1}\right].
\end{equation}
Let us fix some $k$ in $\{2,\dots,L-1\}$.
\paragraph{Proof of~\eqref{eq:A}}
To compute $A$, it is sufficient to consider the
case when $x_{1}=\vee x_{i}$, provided a multiplication by $k$, hence
\begin{eqnarray*}
A & = &  k\intop_{x_{1}=\vee y_{i}}^{b}\left(\intop_{\left[\wedge y_{i},x_{1}\right]^{k-1}}\mathbf{1}_{|x_{1}-\wedge y_{i}|\leq\delta}\ dx_{2}\ldots dx_{k}\right)dx_{1}\\
 & = &  k\mathbf{1}_{|\vee y_{i}-\wedge y_{i}|\leq\delta}\intop_{x_{1}=\vee y_{i}}^{\min(\wedge y_{i}+\delta,b)}\left(\intop_{\left[\wedge y_{i},x_{1}\right]^{k-1}}1\ dx_{2}\ldots dx_{k}\right)dx_{1}\\
 & = &  k\mathbf{1}_{|\vee y_{i}-\wedge y_{i}|\leq\delta}\intop_{x_{1}=\vee y_{i}}^{\min(\wedge y_{i}+\delta,b)}\left(x_{1}-\wedge y_{i}\right)^{k-1}\, dx_{1}\\
 & = & \mathbf{1}_{|\vee y_{i}-\wedge y_{i}|\leq\delta}\left[\left(\min(\wedge y_{i}+\delta,b)-\wedge y_{i}\right)^{k}-\left(\vee y_{i}-\wedge y_{i}\right)^{k}\right]\\
 & = & \mathbf{1}_{|\vee y_{i}-\wedge y_{i}|\leq\delta}\left[\left(\min(\delta,b-\wedge y_{i})\right)^{k}-\left(\vee y_{i}-\wedge y_{i}\right)^{k}\right].
\end{eqnarray*}
\paragraph{Proof of~\eqref{eq:B}}
To calculate $B$, we use the same idea and consider the case when $x_{1}=\wedge x_{i}$, leading to
\begin{eqnarray*}
B & = &  k\intop_{x_{1}=\max\left(\vee y_{i}-\delta,a\right)}^{\wedge y_{i}}\left(\vee y_{i}-x_{1}\right)^{k-1}\, dx_{1}\\
 & = & \mathbf{1}_{|\vee y_{i}-\wedge y_{i}|\leq\delta}\left[\left(\vee y_{i}-\max\left(\vee y_{i}-\delta,a\right)\right)^{k}-\left(\vee y_{i}-\wedge y_{i}\right)^{k}\right]\\
 & = & \mathbf{1}_{|\vee y_{i}-\wedge y_{i}|\leq\delta}\left[\left(\min(\delta,\vee y_{i}-a)\right)^{k}-\left(\vee y_{i}-\wedge y_{i}\right)^{k}\right].
\end{eqnarray*}
\paragraph{Proof of~\eqref{eq:C}} This case is pretty clear.
\[
C=\intop_{\left[\wedge y_{i},\vee y_{i}\right]^{k}}\mathbf{1}_{\left|\vee y_{i}-\wedge y_{i}\right|\leq\delta}\ dx_{1}\ldots dx_{k}=\mathbf{1}_{|\vee y_{i}-\wedge y_{i}|\leq\delta}\left(\vee y_{i}-\wedge y_{i}\right)^{k}
\]
\paragraph{Proof of~\eqref{eq:D}} To calculate $D$, it is sufficient to consider the case when $x_{1}=\wedge x_{i}$
and $x_{2}=\vee x_{i}$, provided a multiplication by $k\left(k-1\right)$, hence
\begin{eqnarray*}
D & = &  k\left(k-1\right)\intop_{x_{1}=a}^{\wedge y_{i}}\intop_{x_{2}=\vee y_{i}}^{b}\left(\intop_{x_{1}}^{x_{2}}\mathbf{1}_{|x_{2}-x_{1}|\leq\delta}\ dx_{3}\ldots dx_{k}\right)dx_{2}dx_{1}\\
 & = &  k\left(k-1\right)\mathbf{1}_{|\vee y_{i}-\wedge y_{i}|\leq\delta}\intop_{x_{1}=\max\left(\vee y_{i}-\delta,a\right)}^{\wedge y_{i}}\intop_{x_{2}=\vee y_{i}}^{\min(x_{1}+\delta,b)}\left(x_{2}-x_{1}\right)^{k-2}\, dx_{2}dx_{1}\\
 & = &  k\mathbf{1}_{|\vee y_{i}-\wedge y_{i}|\leq\delta}\intop_{x_{1}=\max\left(\vee y_{i}-\delta,a\right)}^{\wedge y_{i}}\left(\min(x_{1}+\delta,b)-x_{1}\right)^{k-1}-\left(\vee y_{i}-x_{1}\right)^{k-1}\, dx_{1}\\
 & = & \mathbf{1}_{|\vee y_{i}-\wedge y_{i}|\leq\delta}\left[\left(\vee y_{i}-\wedge y_{i}\right)^{k}-\left(\vee y_{i}-\max\left(\vee y_{i}-\delta,a\right)\right)^{k}\right.\\
 &  & \left.+k\left(\min(\wedge y_{i},b-\delta)-\max\left(\vee y_{i}-\delta,a\right)\right)\delta^{k-1}+\left(b-\min(\wedge y_{i},b-\delta)\right)^{k}-\left(b-\wedge y_{i}\right)^{k}\right]\\
 & = & \mathbf{1}_{|\vee y_{i}-\wedge y_{i}|\leq\delta}\left[\left(\vee y_{i}-\wedge y_{i}\right)^{k}-\left(\min(\delta,\vee y_{i}-a)\right)^{k}\right.\\
 &  & \left.+k\left(\min(\wedge y_{i},b-\delta)-\max\left(\vee y_{i}-\delta,a\right)\right)\delta^{k-1}+\left(\max(\delta,b-\wedge y_{i})\right)^{k}-\left(b-\wedge y_{i}\right)^{k}\right].
\end{eqnarray*}
\paragraph{Proof of~\eqref{eq:Sigma}} Remark that
\begin{equation}\label{eq:relation:min}
\left(\min(\delta,b-\wedge y_{i})\right)^{k}+\left(\max(\delta,b-\wedge y_{i})\right)^{k}=\delta^{k}+\left(b-\wedge y_{i}\right)^{k}
\end{equation}
and 
\begin{equation}\label{eq:relation:max}
\max\left(\vee y_{i}-\delta,a\right)=\max\left(\vee y_{i},a+\delta\right)-\delta.
\end{equation}
Gathering~\eqref{eq:A},~\eqref{eq:B},~\eqref{eq:C},~\eqref{eq:D},~\eqref{eq:relation:min} and~\eqref{eq:relation:max} gives~\eqref{eq:Sigma}.
Hence, Equation~\eqref{eq:Sigma} holds for every $k$ in $\{2,\dots,L-1\}$. 

Moreover, if $k=0$, then $\Sigma\left( y_{1},\dots ,y_{L-k} \right)=\mathbf{1}_{|\vee y_{i}-\wedge y_{i}|\leq\delta}$ and, if $k=1$, then $\Sigma\left( y_{1},\dots ,y_{L-k} \right)=\mathbf{1}_{|\vee y_{i}-\wedge y_{i}|\leq\delta}\left[\min(\wedge y_{i}+\delta,b)-\max\left(\vee y_{i}-\delta,a\right)\right]$. To summarize, Equation~\eqref{eq:Sigma} holds for every $k$ in $\{0,\dots,L-1\}$.

It remains to compute $$I(L,k)=\intop\limits _{\left[a,b\right]^{L-k}}\Sigma\left(y_{1},\ldots,y_{L-k}\right)^{2}\, dy_{1}\ldots dy_{L-k}.$$
In order to do that, let us decompose the integral with respect to the following conditions on $(y_{1},\dots ,y_{L-k})$:
\begin{enumerate}
\item $\vee y_{i}< a+\delta$. In this case, $\Sigma=\delta^{k-1}\left[\delta+k\left(\wedge y_{i}-a\right)\right]$,
and denote the integral $Y$.
\item $\wedge y_{i}>b-\delta$. In this case, $\Sigma=\delta^{k-1}\left[\delta+k\left(b-\vee y_{i}\right)\right]$,
and denote the integral $Z$.
\item $\vee y_{i}> a+\delta$ and $\wedge y_{i}< b-\delta$. In this
case, $\Sigma=\mathbf{1}_{|\vee y_{i}-\wedge y_{i}|\leq\delta}\delta^{k-1}\left[\left(k+1\right)\delta-k\left(\vee y_{i}-\wedge y_{i}\right)\right]$,
and denote the integral $W$.
\end{enumerate}
These three cases are distinct because $\delta<(b-a)/2$, so we have partitioned $\left[a,b\right]^{L-k}$ up to a null measure set and $I(L,k)=Y+Z+W$.
Let us show the following equations for all $k=0,\dots ,L-2$,
\begin{align}
\label{eq:Y:Z} Y=Z=& \,C\left(L,k\right)\delta^{L+k},\\
\label{eq:W} W=& f\left(L,k\right)\left(b-a\right)\delta^{L+k-1}-\left[f\left(L,k\right)+g\left(L,k\right)\right]\delta^{L+k},
\end{align}
where
\begin{equation}\label{eq:def:C:n:k}
C\left(L,k\right)=\left(L-k\right)\frac{\left(k+1\right)^{L-k+2}}{k^{L-k}}\int_{0}^{\frac{k}{k+1}}t^{L-k-1}\left(1-t\right)^{2}\, dt,
\end{equation}
\[
f\left(L,k\right)=\left(L-k\right)\left(k+1\right)^{2}-2\left(L-k-1\right)k\left(k+1\right)+\frac{\left(L-k\right)\left(L-k-1\right)}{\left(L-k+1\right)}k^{2}
\]
and
\[
g\left(L,k\right)=\left(k+1\right)^{2}-2\frac{\left(L-k-1\right)k\left(k+1\right)}{\left(L-k+1\right)}+\frac{\left(L-k\right)\left(L-k-1\right)}{\left(L-k+1\right)\left(L-k+2\right)}k^{2}.
\]

Let us fix some $k$ in $\{0,\dots,L-2\}$.
\paragraph{Proof of~\eqref{eq:Y:Z}}
To compute $Y$, it is sufficient to consider the
case when  $y_{1}=\wedge y_{i}$, provided a multiplication by $\left(L-k\right)$, hence
\begin{eqnarray*}
Y & = &  \intop_{\vee y_{i}\leq a+\delta}\Sigma(y_{1},\dots ,y_{L-k})^{2}\ dy_{1}\ldots dy_{L-k}\\
 & = &  \left(L-k\right)\delta^{2k-2}\intop_{y_{1}=a}^{a+\delta}\left(\intop_{\left[y_{1},a+\delta\right]^{L-k-1}}\left[\delta+k\left(y_{1}-a\right)\right]^{2}\, dy_{2}\ldots dy_{L-k}\right)dy_{1}\\
 & = &  \left(L-k\right)\delta^{2k-2}\intop_{y_{1}=a}^{a+\delta}\left(a+\delta-y_{1}\right)^{L-k-1}\left[\delta+k\left(y_{1}-a\right)\right]^{2}\, dy_{1}.
\end{eqnarray*}
Defining the variable $u=a+\delta-y_{1}$ leads to
\begin{eqnarray*}
Y & = &  \left(L-k\right)\delta^{2k-2}\int_{0}^{\delta}u^{L-k-1}\left[\delta+k\left(\delta-u\right)\right]^{2}\, du\\
 & = &  \left(L-k\right)\delta^{2k-2}\int_{0}^{\delta}u^{L-k-1}\left[\left(k+1\right)\delta-ku\right]^{2}\, du,
\end{eqnarray*}
and by defining the variable $t=\frac{ku}{\left(k+1\right)\delta}$ we have
\begin{eqnarray*}
Y & = &  \left(L-k\right)\delta^{2k-2}\int_{0}^{\frac{k}{k+1}}\left(\frac{\left(k+1\right)\delta t}{k}\right)^{L-k-1}\left(k+1\right)^{2}\delta^{2}\left(1-t\right)^{2}\frac{\left(k+1\right)\delta}{k}dt\\
 & = &  \left(L-k\right)\delta^{L+k}\frac{\left(k+1\right)^{L-k+2}}{k^{L-k}}\int_{0}^{\frac{k}{k+1}}t^{L-k-1}\left(1-t\right)^{2}dt.
\end{eqnarray*}
The computation of $Z$ can be done in the same way by inverting the roles of $a$ and $b$ on the one hand and the roles of $\wedge y_{i}$ and $\vee y_{i}$ on the other hand. This leads to $Z=Y$ and Equation~\eqref{eq:Y:Z}.

\paragraph{Proof of~\eqref{eq:W}}
To compute $W$, it is sufficient to consider the
case when  $y_{1}=\wedge y_{i}$ and $y_{2}=\vee y_{i}$, provided a multiplication by $\left(L-k\right)\left(L-k-1\right)$, hence
\begin{eqnarray*}
W & = &  \left(L-k\right)\left(L-k-1\right)\delta^{2k-2}\\
 &  & \quad \intop_{y_{1}=a}^{b-\delta}\intop_{y_{2}=\max\left(y_{1},a+\delta\right)}^{b}
 \left(\int\mathbf{1}_{|y_{2}-y_{1}|\leq\delta}\left[\left(k+1\right)\delta-k\left(y_{2}-y_{1}\right)\right]^{2}\, dy_{3}\ldots dy_{L-k}\right)dy_{2}dy_{1}\\
 & = &  \left(L-k\right)\left(L-k-1\right)\delta^{2k-2}\\
 &  &  \quad \intop_{y_{1}=a}^{b-\delta}\intop_{y_{2}=\max\left(y_{1},a+\delta\right)}^{y_{1}+\delta}
 \left(\left(y_{2}-y_{1}\right)^{L-k-2}
 \left[\left(k+1\right)^{2}\delta^{2}-2k
 \left(k+1\right)\delta
 \left(y_{2}-y_{1}\right)\right.\right.\\
 & &  \quad \hphantom{\intop_{y_{1}=a}^{b-\delta}\intop_{y_{2}=\max\left(y_{1},a+\delta\right)}^{y_{1}+\delta}} \left. \left. +k^{2} \left(y_{2}-y_{1}\right)^{2}\right]\right)\,  dy_{2}dy_{1},
\end{eqnarray*}
which leads to
\begin{eqnarray*}
W & = & \int_{a}^{b-\delta} \left\{ \left(L-k\right)\delta^{2k-2} \left(k+1\right)^{2}\delta^{2}\left[\delta^{L-k-1}-\left(\max\left(y_{1},a+\delta\right)-y_{1}\right)^{L-k-1}\right] \right\}\\
 &  &  - \left\{ 2 \left(L-k-1\right)\delta^{2k-2} k\left(k+1\right)\delta\left[\delta^{L-k}-\left(\max\left(y_{1},a+\delta\right)-y_{1}\right)^{L-k}\right] \right\}\\
 &  &  + \left\{ \frac{\left(L-k\right)\left(L-k-1\right)}{\left(L-k+1\right)}\delta^{2k-2}k^{2}\left[\delta^{L-k+1}-\left(\max\left(y_{1},a+\delta\right)-y_{1}\right)^{L-k+1}\right]\right\} \, dy_{1}\\
 & = & W_{1} + W_{2},
\end{eqnarray*}
where $W_{1}$ (resp. $W_{2}$) denotes the integral between $a$ and $a+\delta$ (resp. between $a+\delta$ and $b-\delta$). Let us denote 
\begin{equation}\label{eq:def:f:n:k}
f\left(L,k\right)=\left(L-k\right)\left(k+1\right)^{2}-2\left(L-k-1\right)k\left(k+1\right)+\frac{\left(L-k\right)\left(L-k-1\right)}{\left(L-k+1\right)}k^{2}.
\end{equation} 
Then, on the one hand
\begin{eqnarray*}
W_{1} & = &  \delta^{2k-2}\left[\int_{a}^{a+\delta}f\left(L,k\right)\delta^{L-k+1}\, dy_{1}\right.\\
 &  & -\left(L-k\right)\left(k+1\right)^{2}\delta^{2}\int_{a}^{a+\delta}\left(a+\delta-y_{1}\right)^{L-k-1}\, dy_{1}\\
 &  &  +2\left(L-k-1\right)k\left(k+1\right)\delta\int_{a}^{a+\delta}\left(a+\delta-y_{1}\right)^{L-k}\, dy_{1}\\
 &  & \left. -\frac{\left(L-k\right)\left(L-k-1\right)}{\left(L-k+1\right)}k^{2}\int_{a}^{a+\delta}\left(a+\delta-y_{1}\right)^{L-k+1}\, dy_{1}\right]\\
 & = &  f\left(L,k\right)\delta^{L+k}-g\left(L,k\right)\delta^{L+k},
\end{eqnarray*}
with 
\begin{equation}\label{eq:def:g:n:k}
g\left(L,k\right)=\left(k+1\right)^{2}-2\frac{\left(L-k-1\right)k\left(k+1\right)}{\left(L-k+1\right)}+\frac{\left(L-k\right)\left(L-k-1\right)}{\left(L-k+1\right)\left(L-k+2\right)}k^{2}.
\end{equation}
On the other hand,
\begin{eqnarray*}
W_{2} & = &  \delta^{2k-2}\int_{a+\delta}^{b-\delta}\left[\left(L-k\right)\left(k+1\right)^{2}-2\left(L-k-1\right)k\left(k+1\right)\right.\\
& & \left. \hphantom{\delta^{2k-2}\int_{a+\delta}^{b-\delta} \left(L-k\right)\left(k+1\right)^{2}} \quad \quad +\frac{\left(L-k\right)\left(L-k-1\right)}{\left(L-k+1\right)}k^{2}\right]\delta^{L-k+1}\, dy_{1}\\
 & = & \left(b-a-2\delta\right)f\left(L,k\right)\delta^{L+k-1}\\
 & = & f\left(L,k\right)\left(b-a\right)\delta^{L+k-1}-2f\left(L,k\right)\delta^{L+k}
\end{eqnarray*}
where $f\left(L,k\right)$ is defined by~\eqref{eq:def:f:n:k}. Then,~\eqref{eq:W} clearly follows from $W=W_{1}+W_{2}$.

Moreover, if $k=L-1$, then Equations~\eqref{eq:Y:Z} and~\eqref{eq:W} are still valid. To summarize, Equations~\eqref{eq:Y:Z} and~\eqref{eq:W} hold true for every $k$ in $\{0,\dots,L-1\}$.

Gathering~\eqref{eq:Y:Z} and~\eqref{eq:W} yields
\begin{equation*}
I(L,k) = f\left(L,k\right)\left(b-a\right)\delta^{L+k-1}-\left[f\left(L,k\right)+g\left(L,k\right)-2C\left(L,k\right)\right]\delta^{L+k},
\end{equation*}
for every $k$ in $\{0,\dots ,L-1\}$.\\

To conclude, the integral involved in~\eqref{eq:def:C:n:k} can be computed with respect to $k$ and $L$ in the following way,
\[
\int_{0}^{\frac{k}{k+1}}t^{L-k-1}\left(1-t\right)^{2}\, dt=\left(\frac{k}{k+1}\right)^{L-k}\left[\frac{1}{L-k}-\frac{2k}{\left(k+1\right)\left(L-k+1\right)}+\frac{k^{2}}{\left(k+1\right)^{2}\left(L-k+2\right)}\right].
\]
Moreover, in the result stated in Proposition~\ref{prop:Formule des I(k)} we just used the software Mathematica in order to simplify
the expressions. These simplifications lead to
\[
f\left(L,k\right)=\frac{k\left(k+1\right)+L\left(L+1\right)}{L-k+1}
\]
and
\[
h\left(L,k\right):=f\left(L,k\right)+g\left(L,k\right)-2C\left(L,k\right)=\frac{-k^{3}+k^{2}(2+L)+k(5+2L-L^{2})+L^{3}+2L^{2}-L-2}{(L-k+2)(L-k+1)}.
\]

\newpage{}

\bibliographystyle{apalike}
\bibliography{references}

\end{document}